\theoremstyle{definition}
\newtheorem*{defn}{Definition}
\newtheorem{thm}{Theorem}[section]
\newtheorem{lem}{ \bf Lemma}[section]
\newtheorem{prop}{\bf Proposition}[section]
\newtheorem{cor}{\bf Corollary}[section]
\theoremstyle{remark}
\newtheorem{rem}{Remark}[section]
\theoremstyle{example}
\newtheorem{ex}{Example}[section]
\newcommand{\be}{\begin{equation}}
\newcommand{\ee}{\end{equation}}
\newcommand{\Bea}{\begin{eqnarray*}}
\newcommand{\Eea}{\end{eqnarray*}}
\newcommand{\bea}{\begin{eqnarray}}
\newcommand{\eea}{\end{eqnarray}}
\numberwithin{equation}{section}
\begin{document}
\title[Volume Growth on manifolds with more than one end]{Volume Growth on manifolds with more than one end}
\author{Anushree Das \and Soma Maity}

\address{Department of Mathematical Sciences, Indian Institute of Science Education and Research Mohali, \newline Sector 81, SAS Nagar, Punjab- 140306, India.}
\email{somamaity@iisermohali.ac.in}

\address{Department of Mathematical Sciences, Indian Institute of Science Education and Research Mohali, \newline Sector 81, SAS Nagar, Punjab- 140306, India.}
\email{ph20016@iisermohali.ac.in}

\subjclass{Primary 51F30,53C21,53C23}

\begin{abstract} 
For an open manifold $M$ and a function $v$ with bounded growth of derivative, there exists a Riemannian metric of bounded geometry on $M$ such that the volume growth function lies in the same growth class as $v$. This was proved by R. Grimaldi and P. Pansu with the proof focusing on the case of manifolds with a single end. We prove this in the case of manifolds with multiple ends and call the constructed metrics Grimaldi-Pansu metrics. We give uniform bounds for the volume growth function of these metrics in terms of the given bgd-function in the case of a certain class of manifolds which can be written as connected sums of a finite collection of closed and compact manifolds. We study the volume doubling condition and the Relatively Connected Annulus (R.C.A.) property of the Grimaldi-Pansu metrics, which play an important role in studying geometric analysis on manifolds with finitely many ends.

\end{abstract}

\footnotetext{Data sharing is not applicable to this article as no datasets were generated or analyzed during the current study.}

\keywords{growth of volume, bounded geometry, ends}
\maketitle

\section{Introduction}\label{intro} Let $(M,g)$ be a complete non-compact smooth Riemannian manifold of dimension $m$ without boundary. $(M,g)$ is said to have bounded geometry if the injectivity radius $i_g$ is bounded below by $1$ and the sectional curvature $K$ satisfies $|K|\leq 1$. Any non-compact Riemannian covering space of a compact Riemannian manifold satisfies the bounded geometry condition. A volume growth function $v$ of $(M,g)$ is a positive function on $\mathbb{R}_+$ such that $v(r)$ is the volume of the ball of radius $r$ centered at a point $o$ in $M$. Badura, Funar, Grimaldi, and Pansu investigate those functions which are volume growth functions of a Riemannian manifold with bounded geometry, and their relations with the topology of the manifold in \cite{GP}, \cite{BM}, \cite{FG}. 

    A function $v:\mathbb{N} \to \mathbb{R}_+$ is said to have bounded growth of derivative if there exists a positive integer $L$ such that, $\forall n\in \mathbb{N}$,
    $$\frac{1}{L}\leq v(n+2)-v(n+1)\leq L(v(n+1)-v(n)).$$

We call a function with bounded growth of derivative a \emph{bgd-function} in short. A volume growth function of a Riemannian manifold with bounded geometry, when restricted to $\mathbb{N}$, is a bgd-function \cite{GP}.
 Two non-decreasing functions $f,h:\mathbb{N}\rightarrow \mathbb{R}_{+}$ have the same growth type if there exists an integer $A\geq 1$ such that for all $n\in \mathbb{N}$, 
\Bea
f(n)\leq Ah(An+A)+A \ \ \text{and} \ \ h(n)\leq Af(An+A)+A. 
\Eea
We call the constant $A$ a \textit{growth constant} for $f$ and $h$. Functions of the same growth type define an equivalence relation on the set of non-decreasing functions from $\mathbb{N}$ to $\mathbb{R}_{+}$, and the equivalence classes are denoted by $[.].$ The volume growth functions of a Riemannian manifold based at two different points are of the same growth type and hence belong to the same equivalence class. Any scaling of the Riemannian metric also does not change the growth type of volume growth functions. 
Next, we define the ends of a manifold.
\begin{defn}
    Consider an exhaustion $K_1\subseteq K_2\subseteq\dots$ of $M$ by compact submanifolds. Let $U_i=M\setminus K_i$. An end of a manifold is any sequence $\{V_i\}_{i=1}^\infty$ where each $V_i$ is a connected component of $U_i$ and $V_1\supseteq V_2\supseteq\cdots$.
\end{defn}
The number of ends of a manifold does not depend on the choice of the exhaustion, but only on the topology of $M$. A non-compact manifold is said to be of finite topological type if it admits an exhaustion by compact submanifolds $M_i$ such that the boundary of $M_i$, denoted by $\partial M_i$, are all diffeomorphic. Grimaldi and Pansu constructed a Riemannian metric of bounded geometry on an open manifold such that the volume growth lies in a given class of bgd-functions in \cite{GP}.
\begin{thm}
\label{t3}
Let $M$ be a non-compact connected manifold without boundary.
\begin{enumerate}
    \item If $M$ has finite topological type, every bgd-function belongs to the growth type of a Riemannian manifold of bounded geometry diffeomorphic to $M$.
    \item If $M$ has infinite topological type, a bgd-function $v$ belongs to the growth type of a Riemannian manifold of bounded geometry diffeomorphic to $M$ if and only if $\lim_{n\to\infty} \frac{v(n)}{n}=+\infty$.
\end{enumerate}
\end{thm}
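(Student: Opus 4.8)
\medskip
\noindent\emph{Proof plan.} The plan is to build the metric along an exhaustion of $M$, using warped products on ``cylindrical'' pieces to prescribe the volume and compactly supported bounded-geometry surgeries on the cross-sections to install the topology, the underlying smooth manifold being $M$ by construction; the several ends are treated in parallel, so it is essentially enough to describe one end. First I would normalize $v$. Since the growth type is unchanged under multiplication by a positive constant and under bounded additive perturbations, it suffices to realize a growth-equivalent bgd-function; a running-maximum-with-geometric-decay, $\hat a_n:=\max_{j\le n}\theta^{\,n-j}a_j$ with $a_n:=v(n+1)-v(n)$ and $\theta\in(0,1)$ fixed, upgrades the one-sided estimate $a_{n+1}\le La_n$ to a two-sided one, $\tfrac{1}{c}\le\hat a_{n+1}/\hat a_n\le c$, while keeping $\sum_{j<n}\hat a_j$ in the growth type of $v$ (one has $\sum a_j\le\sum\hat a_j\le(1-\theta)^{-1}\sum a_j$). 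In case (2) I would moreover pass to a representative with $a_n\to+\infty$: the standard fact that $[v]$ contains a bgd-function with increments tending to $+\infty$ precisely when $\lim v(n)/n=+\infty$ is what forces the dichotomy. One last scaling lets me assume $a_0$, hence every $a_n$, exceeds any universal constant used below.

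\medskip
\noindent On a collar $\Sigma\times[0,\infty)$, with $\Sigma$ a component of $\partial M_0$ carrying a fixed metric $h$, I would take $g=dt^2+F(t)^{2/(m-1)}h$, where $F>0$ is smooth with $F(n)\asymp a_n$. Then $\sqrt{\det g}=F\sqrt{\det h}$, so $\operatorname{vol}(\Sigma\times[0,r])\asymp\int_0^rF\asymp\sum_{n<r}a_n=v(r)$, and since $t$ is $1$-Lipschitz and $M_0$ has bounded diameter, the distance from $M_0$ to $\Sigma\times\{r\}$ is $r+O(1)$ uniformly on $\Sigma$. The sectional curvature of $g$ is controlled by $|F'/F|$, $|F''/F|$ and $\|\operatorname{Rm}_h\|F^{-2/(m-1)}$, and the injectivity radius from below by $F^{1/(m-1)}\operatorname{inj}(h)$: the two-sided ratio bound makes the first two quantities bounded, and $F\ge a_0$ large makes the last one small and the injectivity radius at least $1$. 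Putting such a piece on every end with a common $F$, so the volumes add to $\asymp v$, and gluing it across a fixed compact collar to an arbitrary fixed metric on $M_0$, gives $\operatorname{vol}(B(o,r))\asymp v(r)$ (using $v(r-C)\asymp v(r)$, which follows from the bgd condition). When $M$ has finite topological type in the sense above this already yields part (1): the ends are products up to attaching closed summands, and a summand is installed as in the next step, inside a bounded region costing bounded extra volume, which is available since every shell has volume $\gtrsim 1/L$.

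\medskip
\noindent In general, and necessarily when $M$ has infinite topological type, the pieces $M_{i+1}\setminus M_i$ are nontrivial compact cobordisms, which a Morse function decomposes into finitely many elementary handle attachments, i.e.\ surgeries on the cross-section. Each surgery is realized by a compactly supported bounded-geometry modification of the collar model, performed inside a metric ball of bounded radius sitting in the cross-section (which is large wherever $F$ is), costing bounded extra volume and preserving the width. I would place the $i$-th cobordism at a combinatorial radius $\rho_i$ with $a_{\rho_i}$ comparable to the cross-sectional complexity $q_i$ of $\partial M_i$: in the finite-type case the $q_i$ are bounded and nothing is needed, whereas in the infinite-type case $q_i\to\infty$, and since after normalization $a_n\to\infty$ while $a_n$ changes by at most a bounded factor per step, one can choose $\rho_1<\rho_2<\cdots$ with $a_r\gtrsim q_i$ throughout $[\rho_i,\rho_{i+1})$ — exactly the room needed to carry a cross-section of complexity $q_i$ with area $\asymp a_r$ (a bounded-geometry hypersurface of complexity $q$ has area $\gtrsim q$) and to perform the $O(q_i)$ surgeries there. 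Assembling the pieces, checking the gluings, and recomputing the volume as above proves the ``if'' direction of (2).

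\medskip
\noindent Conversely, if $(M,g)$ has bounded geometry and $M$ has infinite topological type, then for the exhaustion by suitably smoothed metric balls the hypersurfaces $\partial B(o,r)$ cannot stabilize, so their topological complexity tends to $\infty$ with $r$. Since $\partial B(o,r)$ is a level set of the $1$-Lipschitz function $d(o,\cdot)$ it meets each unit ball of $M$ in bounded $(m-1)$-volume, so a packing plus Mayer--Vietoris argument in the bounded-geometry shell $B(o,r+1)\setminus B(o,r-1)$ bounds the complexity of $\partial B(o,r)$ by a constant times $\operatorname{vol}_{m-1}(\partial B(o,r))$, itself at most $v'(r)$ for a.e.\ $r$; hence $v'(r)\to\infty$ and $v(n)/n\to\infty$ (when $M$ has infinitely many ends this is immediate, each end contributing for $r$ large a disjoint unit ball to the shell). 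The technical heart of the argument is the construction step: producing, inside the collar model, bounded-geometry cobordisms that realize a prescribed change of topology, have width comparable to their combinatorial length, and carry a prescribed, possibly large, volume, together with the bookkeeping matching the growth of the cross-sectional complexity against that of the increments $a_n$ — which is exactly why part (2) requires $\lim v(n)/n=+\infty$.
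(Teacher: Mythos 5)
Your plan is a genuinely different construction from the one the paper carries out. The paper (following Grimaldi--Pansu) realizes the growth by \emph{branching}: an admissible tree $\mathcal{T}$ with $\approx v(n)-v(n-1)$ branches at level $n$, each branch a unit-size tube built from a fixed stock of model pieces $K, J, HS, R_j$, with the topology of $M$ installed as pieces $Q_j$ along a sparse set $S$ of trunk levels; the volume count then reduces to the combinatorial growth of $\mathcal{T}$ plus the two-sided comparison $d(o,x)\asymp r(x)$ via the marked points $y_{P_i}$. You instead realize the growth by \emph{dilation}: a warped product $dt^2+F(t)^{2/(m-1)}h$ over a fixed cross-section, with the topology installed by surgeries inside the large fibers. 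Your normalization step (the running maximum $\hat a_n$) is correct and plays the role of the paper's Lemma 11 reduction, and your matching of cross-sectional complexity $q_i$ against the increments $a_{\rho_i}$ correctly isolates why $\lim v(n)/n=+\infty$ is the dividing line in part (2). What the tree approach buys is that all metric constructions are done once and for all on finitely many compact model pieces (Proposition \ref{p13}), so bounded geometry and the volume/diameter parameters are automatic; what your approach would buy, if completed, is a more geometric picture and no need for the tree combinatorics or the $O(\lambda^n)$, $\lambda<2$, normalization.

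However, as written there are genuine gaps. First, the step you yourself call the technical heart --- producing, inside the expanding collar, bounded-geometry elementary cobordisms of bounded width and bounded excess volume, and then continuing the warped product over the \emph{new} cross-section with uniform constants --- is asserted, not constructed; after each surgery the fiber metric $h$ changes, and one must re-verify bounded geometry, area $\asymp F$, and smooth gluing, uniformly over infinitely many surgeries. This is precisely the work that the paper's explicit model pieces replace. Second, in the necessity direction your bound ``complexity of $\partial B(o,r)\lesssim \mathrm{vol}_{m-1}(\partial B(o,r))$'' is not justified: metric spheres, even smoothed, need not be bounded-geometry hypersurfaces (no second fundamental form control), so the packing/Mayer--Vietoris argument must be run on the shells $B(o,r+1)\setminus B(o,r-1)$ and their $m$-dimensional volume rather than on the level sets themselves; this can be done but is a different (and lemma-level) argument. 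Third, the multi-end bookkeeping with ``a common $F$ on every end'' overshoots: when the number of components of the cross-section grows (as it must when $M$ has infinitely many ends), giving each component area $\asymp F(r)$ produces total fiber volume $(\#\text{components})\cdot F(r)$, not $F(r)$; you need to distribute the area among components (each still $\gtrsim 1$ by bounded geometry), which is exactly the coordination between branching number and per-branch size that the tree construction handles automatically. Finally, a small point: your curvature bound gives $|f''/f|\le C(L,m)$ for $f=F^{1/(m-1)}$, hence $|K|\le C$ rather than $|K|\le 1$; this is fixed by a final rescaling, which preserves the growth type, but it should be said.
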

Though the above theorem is stated in \cite{GP} in full generality, the proof that the volume growth of the metric constructed there belongs to a given class holds only for manifolds with one end. We discuss this in more detail towards the end of this section. In this paper, we give a detailed proof of Theorem \ref{t3} for manifolds with multiple ends following the same idea, and the Riemannian metrics constructed thereof are called Grimaldi-Pansu metrics. We also show that the growth constant depends only on the topology of the manifold and on the given growth class.

\subsection*{Connected sum along a tree :}\label{connectedsum} All trees are locally finite in this paper. Let $\mathcal{U}$ be a finite collection of $n$-dimensional closed manifolds. We consider an open manifold $M$ which is a connected sum of manifolds from $\mathcal{U}$ along an infinite tree $T$, i.e. the compact manifolds are placed on the vertices of the tree, and an edge represents a connected sum between the pieces on two vertices. 

In \cite{BBM0} Bessières, Besson, and Maillot showed that if the scalar curvature of a complete Riemannian manifold with bounded geometry is uniformly bounded below by a positive constant then the manifold is diffeomorphic to a manifold that is a connected sum along a locally finite graph of finitely many spherical manifolds. In this paper, we prove the following theorem.

\begin{thm}\label{graph}
    Let $M$ be a connected open manifold which is a connected sum of manifolds from a finite set of closed manifolds $\mathcal{U}$ along a rooted infinite tree $T$. Given a bgd-function $v$, there exists a Grimaldi-Pansu metric $g$ on $M$ such that the growth constant of the volume growth function of $g$ based at the root of $T$ and $v$ is bounded above by a constant depending only on $T$, $v$, and $\mathcal{U}$. Moreover, if the degree of each vertex of $T$ is bounded by $k$ then the growth constant depends only on $k$, $v$, and, $\mathcal{U}$. 
\end{thm}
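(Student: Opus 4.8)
The plan is to realize $g$ as an assembly of finitely many Riemannian ``building blocks'' glued along a bounded‑degree tree obtained by modifying $T$, following the Grimaldi--Pansu construction behind Theorem~\ref{t3}, and to track every constant explicitly. First I would fix building blocks. In the bounded‑degree case, for each $U\in\mathcal{U}$ and each $j\in\{1,\dots,k\}$ put a fixed reference metric of bounded geometry on $U$ with $j$ open balls removed, arranged so that every boundary component is a unit round $(n-1)$-sphere with a standard collar; call these the \emph{$\mathcal{U}$-blocks}, and add one \emph{filler block}, namely $S^{n}$ with three such standard punctures. Let $D_{0},V_{\max},V_{\min}$ be the resulting uniform bounds on diameter and volume; these depend only on $k$ and $\mathcal{U}$ (with $n=\dim M$). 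In the general, unbounded‑degree case one instead uses ``caterpillar'' models of $U\#(\text{tube})$ with $j$ ports spaced along a tube, whose diameter grows linearly in $j$, so that the bounds depend on $T$ through its degrees, which is allowed. A \emph{block‑tree} is a rooted bounded‑degree tree $\mathcal{T}$ whose vertices are labelled by block types with the label's puncture count equal to the vertex degree; gluing the blocks along unit cylindrical necks produces a metric $g_{\mathcal{T}}$ which has bounded geometry (finitely many gluing models along standard collars), so after a fixed rescaling $i_{g_{\mathcal{T}}}\ge1$ and $|K|\le1$, without changing growth type.

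Next I would reduce the theorem to a purely combinatorial statement. Since a block at combinatorial distance $D$ from the root of $\mathcal{T}$ sits at metric distance between $D$ and $(D_{0}+1)D$, and each vertex carries at most $k$ unit necks, a routine two‑sided count gives
\[
V_{\min}\,B_{\mathcal{T}}\!\big(\lfloor r/(D_{0}+1)\rfloor\big)\ \le\ \mathrm{vol}_{g_{\mathcal{T}}}(r)\ \le\ (V_{\max}+k)\,B_{\mathcal{T}}(r),
\]
where $B_{\mathcal{T}}(D)$ denotes the number of vertices of $\mathcal{T}$ within combinatorial distance $D$ of the root. Hence $\mathrm{vol}_{g_{\mathcal{T}}}$ and $B_{\mathcal{T}}$ have the same growth type with growth constant bounded by a function of $k$ and $\mathcal{U}$ only. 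So it suffices to produce a bounded‑degree block‑tree $\mathcal{T}$ whose underlying manifold is $M$ and whose ball‑growth $B_{\mathcal{T}}$ has the same growth type as $v$ with growth constant bounded in terms of $k$ and $v$ alone (respectively, $T$ and $v$, in the general case).

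The construction of $\mathcal{T}$ is the heart of the matter. Start from $T$ with its $\mathcal{U}$-labels and enlarge it by two moves that do not change the underlying manifold because they only insert punctured spheres: (i) subdividing an edge of $T$ into a path, the interior vertices being filler blocks; (ii) attaching a bounded‑degree filler subtree at a vertex. I would choose, for each edge $e$ of $T$, a subdivision length $\ell(e)\ge1$, together with filler subtrees, by a greedy sweep outward from the root: maintain a target depth $h$ and include $\mathcal{U}$-blocks and filler so that the number of blocks placed within $\mathcal{T}$-depth $h$ stays comparable to $v(h)$; whenever the next portion of $T$ would push this count past $v(h)$, delay that whole portion by enlarging $\ell(e)$ on the edge(s) $e$ leading into it, and in the opposite situation add a filler subtree to top up. Two features of a bgd‑function enter: its increments contract, $v(n+1)-v(n)\le L\big(v(n)-v(n-1)\big)$, and it grows at least linearly, $v(n)\ge v(1)+(n-1)/L$, with $L=L(v)$; combined with $\deg T\le k$ — so that delaying a portion of $T$ by stretching one incoming edge handles a whole subtree at once — these bound the stretch factors $\ell(e)$ ever needed in terms of $k$ and $L$, force every vertex of $T$ to occur at finite depth (as $v\to\infty$), and keep $B_{\mathcal{T}}$ two‑sidedly comparable to $v$ at every radius with a constant $C(k,L)$. (When $M$ has infinite topological type one uses, as in Theorem~\ref{t3}, that $v(n)/n\to\infty$, which is then also necessary.) Assembling $g=g_{\mathcal{T}}$, which is a Grimaldi--Pansu metric by construction, and composing this comparison with the one of the previous paragraph yields the growth‑constant bound $C(k,v,\mathcal{U})$; the unbounded‑degree case is identical but with $\mathcal{T}$, hence the caterpillar sizes and the sweep, allowed to depend on all of $T$, giving $C(T,v,\mathcal{U})$.

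The main obstacle is exactly the last verification: controlling $B_{\mathcal{T}}$ — equivalently $\mathrm{vol}_{g}$ — from above and below at \emph{intermediate} radii, not merely at the target depths $h$ of the sweep. The danger is that while a whole generation of (possibly many) subdivided necks is being traversed, the partial‑neck volume accumulated in a short annulus overshoots the increment of $v$ dictated by the bgd inequalities. I would neutralise this by arranging the subdivisions so that any unit annulus meets necks of only one generation, and so that the number of those necks times their common length is at most a fixed multiple — depending only on $k$ and $L$ — of $v(r+1)-v(r)$; the contracting‑increment property of $v$ is what makes such a choice possible and is used repeatedly in the bookkeeping.
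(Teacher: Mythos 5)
Your overall architecture — finitely many reference blocks from $\mathcal{U}$, punctured-sphere fillers, gluing along a modified tree, and reducing $\mathrm{vol}_g$ to the combinatorial ball-growth $B_{\mathcal{T}}$ — is essentially the Grimaldi--Pansu scheme the paper uses (your filler block is their piece $J$, your subdivided necks are their trunk/cylinder pieces, your ``delay'' is their choice of the levels $n_j$). However, the step you yourself identify as ``the heart of the matter'' contains a genuine gap, and the one concrete mechanism you offer for it is false. You claim the bgd inequalities together with $\deg T\le k$ ``bound the stretch factors $\ell(e)$ ever needed in terms of $k$ and $L$.'' Take $T$ the full binary tree and $v(n)=n^2$: generation $j$ of $T$ has $2^j$ vertices, so to keep $B_{\mathcal{T}}$ comparable to $v$ that generation must be pushed out to depth $\gtrsim 2^{j/2}$, forcing $\ell(e)\to\infty$. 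Unbounded delays are in fact unavoidable (the paper's $n_j$ are unbounded too, cf.\ Proposition \ref{lnj}); what actually has to be proved is the two-sided comparison of $B_{\mathcal{T}}$ with $v$ at \emph{every} intermediate radius while a whole generation of necks is being traversed. Your proposed neutralisation (``any unit annulus meets necks of only one generation'' and ``the number of necks times their common length is at most a fixed multiple of $v(r+1)-v(r)$'') is not the right condition: per unit annulus only the \emph{number} of crossing necks matters, not number times length, and branches of $T$ of different bushiness get delayed by different amounts, so distinct generations of $T$ inevitably coexist in a single annulus. This bookkeeping is exactly what the paper's Proposition \ref{lnj} (choosing $n_j$ so that $v(n)-v(n-1)\geq U_j$ for $n\geq ln_j$, with $S=\bigcup_j[n_j,n_j+t_j-1]$ of vanishing lower density) and Lemma 16 of \cite{GP} supply, and your sketch re-derives the need for them without actually providing the argument.

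The second gap is the unbounded-degree case. With caterpillar blocks whose diameter grows with the vertex degree, your comparison
$V_{\min}\,B_{\mathcal{T}}(\lfloor r/(D_0+1)\rfloor)\le \mathrm{vol}_{g}(r)$
uses a single uniform diameter bound $D_0$; for a fixed tree $T$ of unbounded degree one has $D_0=\sup_j d_j=\infty$ and the lower bound is vacuous. It is not enough that the final constant be allowed to depend on $T$ — you need a radius-dependent comparison, which is what the paper's height parameters $t_j$ (with $\tfrac13 lt_j\le t_{Q_j}\le T_{Q_j}\le lt_j$) and the piecewise-defined function $r(x)$ of Proposition \ref{p13} provide by letting the large piece $Q_j$ occupy $t_j$ consecutive trunk levels. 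Two smaller omissions: finite branches of $T$ (components with empty outgoing boundary, handled in the paper by capping with $HS$), and the lower bound on volume in annuli where $T$ contributes almost nothing, which forces the filler subtrees to carry growth up to $\lambda^n$ with $\lambda<2$ — i.e.\ the binary-branching admissible tree of Lemma \ref{lw} — rather than an ad hoc ``top up.''
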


Next, we focus on manifolds with finitely many ends and study the existence of Grimaldi-Pansu metrics that satisfy the R.C.A. or R.C.E. conditions while having their volume growth in a desired growth class. In Theorem \ref{rca1} we prove the existence of such metrics for manifolds that are connected sums along a tree, and in Theorem \ref{rca2} we prove the existence of such metrics in a general one-ended manifold under some additional constraints on the given bgd-function. We observe that the volume growth of a Grimaldi-Pansu metric depends on the base point crucially and does not satisfy homogeneity conditions of the growth of volume, like the volume doubling condition. Gilles Carron showed that if a manifold admits a metric that satisfies the volume doubling condition then it has finitely many ends in \cite{CG}. Polynomials are bgd-functions that satisfy the doubling conditions. If $M$ has infinitely many ends then a Grimaldi-Pansu metric on $M$ in the class of a polynomial does not satisfy any doubling condition. We show that if a Grimaldi-Pansu metric satisfies a doubling condition, its volume growth is at most linear in Theorem \ref{doubling}. It would be interesting to find a metric on a manifold with finitely many ends such that its volume growth function belongs to the class of a given a bgd-function that satisfies the doubling condition. The R.C.A. or R.C.E. and volume doubling conditions have implications in geometric analysis on the ends of manifolds and have been extensively used by Grigor'yan, Ishiwata, Saloff-Coste, and Gilles Carron among others in \cite{CG}, \cite{GSC},  \cite{GSC2}, \cite{PG} to study estimates on heat kernel, parabolic Harnack inequality, euclidean volume growth on manifolds with finitely many ends. 

Section \ref{s2} briefly discusses the construction of a Grimaldi-Pansu metric on a manifold given in \cite{GP}. To define a metric of a certain volume growth the authors consider an exhaustion $\{K_i\}$ of the manifold, then define a metric on each $Q_i=K_{i+1}\setminus K_i$. A point $y_{Q_i}$ is fixed in each $\partial Q_i$. To prove that the volume growth of the metric lies in a given equivalence class, the authors use the points $y_{Q_i}$ to create a path between the basepoint and any other point $x\in M$. This path is then used to estimate the distance of $x$ from the basepoint. Here, the connectedness of each $Q_i$ is implicitly assumed. In the case of a manifold with multiple ends, for any exhaustion $\mathcal{A}_i$, there exists some $i$ such that for all $j>i$, $Q_j$ has multiple components. Then this algorithm chooses a point $y_{Q_i}$ in only one of those components. Hence for any point $x$ lying in a different component of $Q_i$, this process does not estimate the distance of $x$ from the base point. Therefore it is necessary to choose different points $y_{Q_i}$ in each component of $Q_i$, and to do so, we need to take into account the number of components of $Q_i$ for each $i$, and hence, the number of ends of $M$. We have modified the proof in \cite{GP} to account for this situation to prove Theorem \ref{t3}. Some propositions, and functions used in \cite{GP} have been modified slightly with this condition in mind. 

In Section \ref{s3}, we focus on open manifolds that are connected sums of finitely many closed manifolds along a tree. Generalizing the Kneser-Milnor Prime Decomposition Theorem Besson, Bessières, and Maillot classified open 3-manifolds that are connected sum of finitely many closed prime manifolds in \cite{BBM}. They showed that for any given tree $T$ and a finite set $\mathcal{U}$ of closed 3-manifolds, the manifolds resulting from connected sums of elements of $\mathcal{U}$ along $T$ with different choices of representative elements at the vertices are not necessarily diffeomorphic. If the number of ends of $T$ is infinite and the cardinality of $\mathcal{U}\geq 3$ then the number of such manifolds which are not diffeomorphic to each other is infinite. We use the ideas of Grimaldi and Pansu to prove that for a given finite collection of closed $n$-manifolds, and any bgd-function $v$, different manifolds that are obtained from connected sum compositions via the same tree $T$, admit a metric with volume growth function in the growth class of $v$ such that the growth constant depends only on $T$, $v$, and, $\mathcal{U}$. If the degree of each vertex is bounded by $k$ then the growth constant is independent of $T.$
 
\subsection*{Acknowlegdement}
The authors would like to thank Gérard Besson and Harish Seshadri for their comments. The first author is supported via a research grant from the National Board of Higher Mathematics, India.

\section{Proof of Grimaldi-Pansu's theorem for manifolds with multiple ends}\label{s2}

We first summarise the construction of the required Riemannian metric on a one-ended manifold by Grimaldi and Pansu in \cite{GP}. Given a one-ended manifold $M$ with dimension $m$ and a bgd-function $v$ they construct a metric of bounded geometry on $M$ such that the volume growth function has the same growth type as $v$. We shall be modifying their construction and proof in this section.

Grimaldi and Pansu show in Lemma $11$ of \cite{GP} that it is enough to consider functions $v$ that satisfy the following conditions, since for any given bgd-function we can find a function satisfying these conditions and lying in the same growth class as the given function:
\begin{enumerate}
    \item $v(0)=1.$
    \item For all $n\in N, \ \ 2\leq v(n+2)-v(n+1)\leq 2(v(n+1)-v(n)).$
    \item $v(n)=O(\lambda^n)$ for some $\lambda<2.$
\end{enumerate}

Consider an exhaustion $\{\mathcal{A}_j\}$ of $M$ such that each $\mathcal{A}_{j+1}\setminus \mathcal{A}_j$ is a manifold of dimension $m$ with boundary. Let $Q_j=\mathcal{A}_{j+1}\setminus \mathcal{A}_j$. The boundary components of $Q_j$ are denoted by $\partial^+ Q_j$ and $\partial^- Q_j$, where $\partial^+ Q_j$ is diffeomorphic to $\partial^- Q_{j+1}$ by the identity diffeomorphism. 

Fix a subset $S$ of $\mathbb{N}$ such that $\liminf_{n\rightarrow \infty}\frac{S\cap \{0,1,\dots,n\}}{n}=0$. Consider a rooted tree $\mathcal{T}$ with growth $v$ such that $\mathcal{T}$ has a single trunk, each vertex has at most $2$ branches, and $S$ is the set of those vertices of the trunk of $\mathcal{T}$ which only have one branch. We refer to Lemma $10$ in \cite{GP} for a proof of the existence of such a tree. Write $S$ as $S=\bigcup_j [n_j,n_j+t_j-1]$. Attach a single copy of $Q_j$ collectively to all the vertices in the interval $[n_j,n_j+t_j-1]$.
Additional pieces attached to the tree are the following. The piece $R_i$ diffeomorphic to $\partial^+Q_j\times [0,1]$ with a disc removed; $K$, an $m$-dimensional cylinder diffeomorphic to $S^{m-1}\times [0,1]$; $J$, diffeomorphic to an $m$-dimensional sphere with $3$ balls removed; $HS$, a half-sphere of dimension $m$.  Attach the pieces $R_j$ to each vertex $k$ of the trunk where $n_j+t_j\leq k < n_{j+1}$, that is, the vertices of the trunk with $2$ branches. For the vertices which do not lie on the trunk, other pieces are attached as per the number of branches of those vertices. The root has a copy of $HS$ attached. The vertices with $2$ branches are attached with a join $J$, those with a single branch with a cylinder $K$, and the ones with $0$ branches are capped with a half-sphere $HS$. The manifold resulting from attaching the pieces in the prescribed manner is diffeomorphic to $M$, and is denoted by $R_\mathcal{T}$.

$Q_j$ and $R_j$ can consist of multiple components, and we explain the construction with some minor modifications in those cases. On each component $Q_{ji}$ of $Q_j$, the boundaries $\partial^+(Q_{ji})$ and $\partial^-(Q_{ji})$ may each also consist of multiple connected components, and we have $\partial^-(Q_j)=\cup \partial^-(Q_{ji})$ and $\partial^+(Q_j)=\cup \partial^+(Q_{ji})$. The number of components of $R_j$ is always equal to the number of components of $\partial^+(Q_{j})$ by construction. We construct $R_j$ such that $R_j$ is diffeomorphic to $\partial^+Q_j\times [0,1]$ with a disc removed from exactly one component. The pieces $HS$, $K$, or $J$ attached to the vertex represented by $R_j$ can be attached to that component of $R_j$ where the disc has been removed. By the diameter of $\partial^+(P)$ (respectively $\partial^-(P)$) of any connected component $P$, we mean the maximum distance between any two points of $\partial^+(P)$ (resp. $\partial^-(P)$) across all the components of $\partial^+(P)$ (resp. $\partial^-(P)$), where the geodesic connecting the two points lies entirely in $P$. When a piece $P$ has multiple connected components $\{P_i\}$, we consider the maximum of the diameter of $\partial^+(P_i)$ (resp. $\partial^-(P_i)$) across all the components and denote that as $\partial^+(P)$ (resp. $\partial^-(P)$).

For a piece $P$, let $t_P$ and $T_P$ respectively denote the minimum and maximum of the distance function to $\partial^-P$, restricted to $\partial^+P$, across all components. For $k\leq T_P$, let $U_{P,k}$ denote the $k$-tubular neighbourhood of $\partial^-P$, and $v_P(k)=vol(U_{P,k})$, $v_P'(k)=v_P(k)-v_P(k-1)$. Here, a $k-$tubular neighbourhood of any set $S$ in a piece $P$ refers to all points of $P$ that lie within distance $k$ of $S$. When $P$ has multiple components $\{P_i\}$, note that the definition implies that $v_P(k)=\Sigma v_{P_i}(k)$. That is, we consider the volume of the tubular neighbourhoods across all the components of $P$.

Proposition $13$ in \cite{GP} gives us estimates for the volume and diameter bounds for each piece:
\begin{prop}\label{p13}
    Let $Q_j$ be a sequence of possibly disconnected compact manifolds with boundary. Assume that
    \begin{itemize}
        \item $\partial Q_j$ is split into two collections of boundary components $\partial^-Q_j$ and $\partial^+ Q_j$;
        \item $\partial^- Q_{j+1}$ is diffeomorphic to $\partial^+ Q_j.$
    \end{itemize}
    Then there exist integers $l,\ h,\ H$, sequences of integers $t_j,\ u_j,\ U_j,\ d_j$ and Riemannian metrics on pieces $Q_j,\ R_j,\ K,\ HS,\ J$ such that
    \begin{enumerate}
        \item For all components of all pieces $P$, the maximal distance of a point of $P$ to $\partial^- P$ in that component is achieved on $\partial^+ P$ in that component. In other words, the maximum of those distances across the components of $P$ is equal to $T_P$.
        \item $\frac{1}{3}lt_j\leq t_{Q_j} \leq T_{Q_j}\leq lt_j$.
        \item For all other pieces $P$, $\frac{1}{3}l\leq t_{P} \leq T_{P}\leq l$.
        \item diameter$(\partial^- Q_{ji})\leq d_j$ on each component of $Q_{ji}$ of $Q_j$.
        \item All components $P_i$ of all pieces $P$ carry a marked point $y_{P_i}\in \partial^- P$. When a connected component $P_i'$ of $P'$ is glued on top of a component $P_j$ of $P$, $d(y_{P_j},y_{P_{i}'})\leq l$ (resp. $lt_j$ if $P=Q_j$), unless $P=R_j$ and $P'$ is of type $K,\ HS$, or $J$. In that case, $d(y_P,y_{P'})\leq d_j$.
        \item For all pieces $P=K,\ HS,\ J,\ h\leq \textrm{min}v'_{P}\leq \textrm{max}v'_{P}\leq H$.
        \item \textrm{max} $v'_{Q_j}\leq U_j$.
        \item \textrm{max} $v'_{R_j}\leq u_j \leq U_j$.
        \item If $\partial^+Q_j$ and $\partial^-Q_j$ are diffeomorphic, then they are isometric, by an isometry that maps $y_j$ to $y_{j+1}$, and $u_{j+1}=u_j$. $\partial^+ Q_j$ and $\partial^- Q_{j+1}$ are isometric for all $j$.
        \item All pieces have bounded geometry and product metric near the boundary.
        \item $\partial^+ Q_j$ and $\partial^-Q_{j+1}$ are isometric.
         
    \end{enumerate}
    $t_j,\ u_j,\ d_j$ are respectively called the height, volume, and diameter parameters.
\end{prop}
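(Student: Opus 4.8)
The plan is to build the Riemannian metrics by prescribing, first, a fixed metric on every boundary hypersurface that appears, and then a metric on each component of each piece which is a Riemannian product near the boundary; the parameters $l,h,H$ and the sequences $t_j,u_j,U_j,d_j$ are then read off from the metrics so constructed. Concretely, I would first fix once and for all a metric on $\partial^+Q_j\cong\partial^-Q_{j+1}$ for every $j$, using the \emph{same} metric on $\partial^+Q_j$ and $\partial^-Q_j$ (related by an isometry sending $y_j$ to $y_{j+1}$) whenever those are diffeomorphic, so that (9) and (11) hold; and a metric on each of the finitely many model boundaries occurring in $R_j,K,HS,J$. Define $d_j$ to be the maximum, over the components $Q_{ji}$, of the diameter of $\partial^-Q_{ji}$ in these fixed metrics. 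Then, on each component of each piece $P$, take any metric agreeing near every boundary component with the chosen boundary metric and which is a product $\partial^\pm P\times[0,\varepsilon)$ there; the collar neighbourhood theorem makes this possible and gives the product-near-the-boundary half of (10).

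Next come the height and basepoint normalisations. To obtain (1)--(3), attach to each piece product cylinders over its boundary components: a cylinder of a fixed short length over $\partial^-P$ to guarantee a uniform lower bound on $t_P$, and a cylinder over $\partial^+P$ whose length exceeds the diameter of the middle part already constructed, which forces the point of each component farthest from $\partial^-P$ to lie on $\partial^+P$, as (1) requires. One then picks the integer $l$ (and, when $P=Q_j$, the integer $t_j$ large) so that all resulting heights fall in $[\frac{1}{3}l,\,l]$, respectively $[\frac{1}{3}lt_j,\,lt_j]$; the factor $3$ is exactly the slack needed to do this uniformly over all pieces and components. Marked points $y_{P_i}\in\partial^-P_i$ are then selected on each component, together with the attaching data, so that when $P_i'$ is glued on top of $P_j$ the point $y_{P_i'}\in\partial^+P_j$ is joined to $y_{P_j}$ by a path that first runs inside $\partial^-P_j$ (length $\le d_j$ if $P=Q_j$, $\le l$ otherwise) and then up through the piece (length $\le T_P$); enlarging $l$ and the $t_j$ to absorb the first term yields the bounds in (5), including the exceptional bound $d_j$ when $P=R_j$ and $P'$ is of type $K,HS,J$. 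Finally I would simply \emph{define} $U_j:=\max_k v_{Q_j}'(k)$ and $u_j:=\max_k v_{R_j}'(k)$ for the metrics just built --- finite because each piece is compact with finitely many components and $v_P(k)=\sum_i v_{P_i}(k)$ --- choosing the metric on $R_j$ (one layer thick) so that $u_j\le U_j$, and taking $R_{j+1}$ isometric to $R_j$ when $\partial^\pm Q_j$ are diffeomorphic so that $u_{j+1}=u_j$; and set $h,H$ to be the minimum and maximum of $\min v_P'$ and $\max v_P'$ over the three fixed compact metrics on $K,HS,J$. Bounded geometry of each piece is automatic from compactness, and the product collars make the assembled metric on $R_{\mathcal T}$ smooth, completing (6)--(11).

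The point that needs genuinely new care, compared with \cite{GP}, is the bookkeeping created by disconnected $Q_j$ and $R_j$, and this is where I expect the main difficulty. By construction $R_j$ must have one component for each component of $\partial^+Q_j$, with a disc removed from exactly one of them so that the next model piece can be attached there; property (1) must be checked in every component of every piece, which the cylinder-attachment argument handles uniformly by taking the $\partial^+$-cylinder length to be the maximum of the relevant diameters over all components. The delicate requirement is (5): from the single marked point $y_{Q_{ji}}\in\partial^-Q_{ji}$ one must reach a marked point on \emph{every} component of $\partial^+Q_{ji}$ within distance $lt_j$. This is where connectedness of $Q_{ji}$ is used --- such paths exist inside $Q_{ji}$ --- and, since each $Q_j$ has only finitely many components, a single enlargement of $l$ (or of the $t_j$) bounds all of them simultaneously. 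Throughout, the tension is that the bounds in (5) push $l$ and the $t_j$ upward while the upper height bounds in (2)--(3) push them down, and the built-in factor $3$ is what makes both achievable; the remaining verifications --- that attaching product cylinders and the small boundary modifications preserve the curvature and injectivity-radius bounds, and that the volume derivatives behave as claimed --- are the same local computations as in \cite{GP}.
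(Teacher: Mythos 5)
Your overall architecture (fix compatible boundary metrics first, take product collars, lengthen the pieces by cylinders to normalise heights, then read off $t_j,u_j,U_j,d_j,h,H$ as maxima/minima over the finitely many components) matches the paper's, and your treatment of the disconnected case --- one marked point per component, $v_P(k)=\sum_i v_{P_i}(k)$, a disc removed from exactly one component of $R_j$ --- is the same bookkeeping the paper does. But there is one genuine gap, and it is precisely the step the paper spends most of its proof on: the claim that ``bounded geometry of each piece is automatic from compactness'' is not compatible with your decision to prescribe the boundary metrics in advance. Bounded geometry here is the \emph{normalised} condition $i_g\geq 1$, $|K|\leq 1$. An arbitrary extension of a prescribed product-collar metric to the interior of $Q_{ji}$ only satisfies $i_g\geq\varepsilon_j$, $|K|\leq C_j$ for constants depending on the piece, and since the $Q_j$ run over an infinite sequence of arbitrary compact manifolds these constants are unbounded. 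The only generic fix is to rescale $Q_j$ by a constant $\lambda_j$ depending on $j$ --- but that destroys the isometry between $\partial^+Q_j$ and $\partial^-Q_{j+1}$ that items (9) and (11) require, since $\lambda_j\neq\lambda_{j+1}$ in general. Attaching product cylinders, as you propose, does not touch the interior curvature and so does not resolve this.

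The paper's resolution is a warped-product interpolation: glue $\partial Q_j\times[0,T]$ with metric $dt^2+f_T(t)^2g$, where $f_T$ increases smoothly from $1$ at the outer boundary (so the unscaled boundary metric, hence the isometry with the neighbouring piece, survives) to $\lambda_j$ at the inner end (so it matches the rescaled interior), with $T$ large enough that $|f_T''/f_T|\leq 1$ and the warped region itself has bounded geometry. This forces a second piece of work you have omitted: because the collar is warped rather than a product, property (1) (farthest point from $\partial^-$ lies on $\partial^+$) and the height bounds $T\leq t_{Q_j}\leq T_{Q_j}\leq 3T$ are verified via a lemma on minimising geodesics in warped products, choosing $T>2D_j$ where $D_j$ is the maximal component diameter. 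Your cylinder-length argument for (1) is the right idea but must be run for the warped metric. The rest of your proposal --- the marked-point estimates in (5), the definitions of $u_j,U_j,h,H$, and the metric on $R_j$ --- goes through as you describe once this normalisation step is in place.
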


The last point in this proposition, while not explicitly stated in Proposition 13, follows from the proof in \cite{GP}. Some other points have been modified slightly to explicitly account for the situation where $Q_j$ has multiple components.

To show that the volume growth function of the manifold $R_\mathcal{T}$ lies in the same class as the growth function $v$ of the tree, Grimaldi and Pansu define a discrete growth function. First, they define a function $r : R_\mathcal{T} \rightarrow \mathbb{N}$ as follows. If $P = Q_j$ and $x\in P$, $r(x)=\lfloor d(x,\partial^-Q_j)\rfloor+n_jl$. If $P$ is any other type of piece, attached at a vertex of $\mathcal{T}$ of level $n$, and $x \in P$, $r(x) = \lfloor d(x, \partial^-P )\rfloor + nl$. Then the discrete growth function $z$ is defined, for $n \in \mathbb{N}$, as \begin{equation}\label{e1}
    z(n)=\text{vol}\{x\in R_\mathcal{T} |r(x)\leq n\}.
\end{equation}    
    Proposition $17$ of \cite{GP} says that the sequence $\{n_j\}$  can be chosen in such a manner that the discrete growth function $z$ of the resulting $R_\mathcal{T}$ is of the same growth type as $v$. This choice of $n_j$ depends on the sequences in Proposition \ref{p13} and on $v$. Finally, it is shown that the volume growth function of $R_\mathcal{T}$ is of the same growth type as $z$, with a growth constant $3$. Hence, $R_\mathcal{T}$ is the required Riemannian manifold diffeomorphic to $M$ with volume growth in the same class as $v$.


 In Proposition \ref{p13} as stated in \cite{CG}, Grimaldi and Pansu choose a single marked point $y_{Q_i}$ in each $\partial Q_i$. In \cite{GP}, to prove that the volume growth of $R_\mathcal{T}$ lies in the same growth class as $z$, the authors use the points $y_{Q_i}$ to create a path between the basepoint and any other point $x$. This path is then used to estimate the distance of $x$ from the basepoint. Here, the connectedness (path-connectedness) of each $Q_i$ is implicitly assumed. In the case of a manifold with multiple ends, for any exhaustion $\mathcal{A}_i$, there exists some $i$ such that for all $j>i$, $Q_j$ has multiple components. Then this algorithm chooses a marked point $y_{Q_i}$ in only one of those components. Hence for any point $x$ lying in a different component of $Q_i$, this process does not give an estimate for the distance of $x$. Therefore it is necessary to choose different marked points $y_{Q_i}$ in each component of $Q_i$, and hence we have modified the Proposition to account for that scenario.

\textbf{Proof of Proposition \ref{p13}}:

The metrics on the pieces of the form $R_j$, $HS$, $K$, and $J$ require no modification from the construction in \cite{GP}. 
Let us describe the metric on $Q_j$ in detail.

Let $Q_j$ consist of possibly multiple disconnected components. The metric is defined on each component in a similar manner. Consider a component $Q_{ji}$ of $Q_j$. $\partial Q_{ji}$ is divided into the disjoint union of $\partial^+ Q_{ji}$ and $\partial^- Q_{ji}$. Both $\partial^+ Q_{ji}$ and $\partial^- Q_{ji}$ might be disconnected and also consist of multiple components. 

We choose a point $y_{Q_{ji}}$ from any component of $\partial^+ Q_{ji}$.

Put any Riemannian metric of bounded geometry on each of the components of $\partial^+ Q_{ji}$ for each $Q_j$. Since the $\partial^- Q_{ji}$ are diffeomorphic to some boundary component of $\partial^+ Q_{j+1}$, we put a metric on $\partial^- Q_{ji}$ such that this diffeomorphism becomes an isometry. The resulting metric on the $\partial Q_{ji}$ can be extended to some collar neighbourhood of $\partial Q_{ji}$ as a product metric. Then, extend the metric arbitrarily to the rest of the $Q_{ji}$ such that we get a Riemannian metric on each $Q_{ji}$. Completing this procedure for all components of $Q_j$ results in a metric on $Q_j$ such that some tubular neighbourhood of each boundary component has bounded geometry. Call this metric $m_j$. There exists a constant $\lambda_{ji}=\text{max}\{\text{injectivity radius}^{-1},\text{maximum sectional curvature}^{\frac{1}{2}}\}$ such that scaling $m_j$ on $Q_{ji}$ by $\lambda_{ji}$ results in a metric of bounded geometry on $Q_{ji}$, and scaling by $\lambda_j=$max$\{\lambda_{ji}\}$ results in a metric of bounded geometry on entire $Q_j$. We shall use this constant to modify the metric $m_j$ into a warped product metric.

Now, we thicken the boundary on all components of $Q_j$. First, glue a cylinder $\partial Q_j \times [0,T]$ to $\partial Q_j$. This gives a new metric $m_{j,T}$ on $Q_j$, and this metric is a product metric of the form $dt^2+g$ on the $T-$tubular boundary of $Q_j$, where $g$ is the restriction of $m_j$ on $\partial Q_j$. We modify this metric to a warped product metric on this $T-$tubular neighbourhood.

We need to create the warped product metric so that $\partial^+ Q_j$ and $\partial^- Q_{j+1}$ are isometric, while retaining the bounded geometry property of the pieces. Recall that the initial metrics chosen on the boundaries $\partial^+Q_j$ and $\partial^-Q_{j+1}$ were isometric and satisfied the bounded geometry condition, but they were then scaled by different constants in order to ensure that the entire pieces $Q_j$ and $Q_{j+1}$ satisfy the bounded geometry condition. We try to find a warping function such that the boundaries of the $T-$tubular neighbourhoods attached to $\partial^+Q_j$ and $\partial^-{Q_{j+1}}$ have the initial metric on one side and the scaled metric on the other. This ensures that the new boundary components of $\partial^+ Q_j$ and $\partial^- Q_{j+1}$ are isometric by the initial metric chosen on the boundaries. We can then use the metric on these boundary components to construct a cylindrical metric on the piece $R_j$, which is discussed later.
For that, we need to construct a smooth increasing function $f_T:[0,T]\to [1,\infty)$ to get a warped product metric that behaves as we require. The function chosen in \cite{GP} for this purpose does not meet the required criteria, so we choose a different function. Our constraints are:
\begin{itemize}
    \item $f_T(t)=1$ when $0 \leq t \leq 1$.
    \item $f'_T(t)>0$ when $1<t<T-1$.
    \item $f_T(t)=\lambda_j$ when $T-1\leq t\leq T$.
\end{itemize}

We can assume $T=2\alpha$ for some $\alpha$, since $T$ can be chosen as large as we desire. Now, let $\beta$ be the least positive solution to the polynomial equation $x^\alpha(\lambda_j -1)-x^{\alpha - 2}+ \lambda_j=0$. Consider $k=\log{\beta}$.

Define a function $f:[1,T-1]\to[1,\infty)$ as $$f(t)=\frac{(\lambda_j - 1)e^{-\frac{T-2}{x-1}}}{e^{-\frac{T-2}{x-1}}+e^{-\frac{T-2}{T-1-x}}}+1.$$ 
Then, by choosing $T$ to be suitably large, we can ensure that $|\frac{f''}{f}|$ remains bounded by $1$ on the domain of $f$.

Now, define the function $f_T:[0,T]\to [1,\infty)$ in the following manner:
$$
f_T(t)=
\begin{cases}
   
     1 & \text{if }  0\leq t \leq 1;\\
     f(t) & \text{if } 1<t<T-1;\\
    \lambda_j & \text{if } T-1\leq t \leq T.
\end{cases}
$$

Then $f_T$ is a smooth function on its domain, and $f_T$ satisfies the constraints mentioned above.
The metric $m_{j,T}$ can be changed from the product metric $dt^2+g$ to the warped product metric given as $dt^2+f_T(t)^2g$ on the $(T)-$tubular neighbourhood of $\partial Q_j$. On the rest of $Q_j$, scale $m_j$ by $\lambda_j$. The resultant metric is a smooth Riemannian metric at all points of $Q_j$, since at $t=T$, the defined metric is smooth and a product metric in a small enough neighourhood of $T$. Call this new metric $g_{j,T}$. Then $g_{j,T}$ remains a product metric on the $1-$tubular neighbourhood of the boundary $\partial Q_j$ by definition of $f$. If $T$ is large enough, $g_{j,T}$ is a metric of bounded geometry by the boundedness of $|\frac{f''(t)}{f(t)}|$.

We state the following Lemma proven in the paper:
\begin{lem}
    Let $(\Tilde{M},\Tilde{g})$ be a complete and connected Riemannian manifold. Let $M = [-T , 0] \times \Tilde{M}$ be equipped with the warped
product metric $dt^2 + f (t)^2\Tilde{g}$. Let $t_0 \in [-T , 0]$. Let $m, m' \in \Tilde{M}$ . Let $s \mapsto \gamma(s) = (t(s),\Tilde{\gamma}(s))$  denote a minimizing
geodesic from $(t_0, m)$ to $(0, m')$ in $M$. Then 
\begin{enumerate}
    \item  $\Tilde{\gamma}$ is a minimizing geodesic from $m$ to $m'$ in $\Tilde{M}$.
    \item If $d_{\Tilde{g}}(m,m')<e^{-1-t_0} - 3$, then $s \mapsto t(s)$ is monotone with derivative $t'(0) > 0$.
\end{enumerate}
\end{lem}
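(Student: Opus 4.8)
The statement is a standard "geodesics in warped products project to geodesics" fact together with a quantitative monotonicity estimate, so I would set it up via the variational characterization of geodesics and the first/second variation formulae. First I would prove part (1). Write the energy of a curve $\gamma(s)=(t(s),\tilde\gamma(s))$ in $M=[-T,0]\times\tilde M$ with metric $dt^2+f(t)^2\tilde g$ as $E(\gamma)=\int \bigl(t'(s)^2+f(t(s))^2\,|\tilde\gamma'(s)|_{\tilde g}^2\bigr)\,ds$. The key observation is that replacing the $\tilde M$-component $\tilde\gamma$ by any shorter (in $\tilde g$) curve between the same endpoints, reparametrized so as to keep the same $t$-component, only decreases the integrand pointwise because $f>0$; hence a minimizer of $E$ must have $\tilde\gamma$ of minimal $\tilde g$-length between $m$ and $m'$, i.e. a minimizing geodesic in $(\tilde M,\tilde g)$. (One should be a little careful about parametrization — the cleanest route is to argue with length rather than energy for this monotonicity-under-replacement step, then note a length-minimizer can be taken to be an energy-minimizer after reparametrization.)

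For part (2) I would extract the geodesic equations for the $t$-component. With $\tilde\gamma$ a unit-speed $\tilde g$-geodesic of $\tilde g$-length $\ell:=d_{\tilde g}(m,m')$, the geodesic equation in the $t$-direction reads $t''=f(t)f'(t)\,|\tilde\gamma'|_{\tilde g}^2$, and the conservation law coming from the warped structure gives $f(t)^2|\tilde\gamma'|^2_{\tilde g}=c$ constant along $\gamma$, so $t''=\dfrac{f'(t)}{f(t)}\,c\ge 0$ since $f$ is increasing. Thus $t'$ is non-decreasing; since $t$ runs from $t_0$ at $s=0_-$ up to $0$ at $s=0$... — more precisely, $\gamma$ goes from $(t_0,m)$ to $(0,m')$, so $t$ increases overall, and because $t'$ is monotone non-decreasing, once I show $t'(0)>0$, monotonicity of $s\mapsto t(s)$ on the whole interval follows (if $t'$ were ever negative it would stay negative by monotonicity, contradicting the net increase). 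So the crux is the strict inequality $t'(0)>0$ under the hypothesis $\ell<e^{-1-t_0}-3$.

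To get $t'(0)>0$ I would argue by contradiction/comparison: suppose $t'$ is not everywhere positive; then by the monotonicity of $t'$ it is $\le 0$ on an initial subinterval, so $t$ first decreases below $t_0$ before coming back up to $0$. I would then bound the length of $\gamma$ from below by the length of a curve that dips down and comes back, and compare it against the length of the "straight" competitor $s\mapsto$ (linear in $t$ from $t_0$ to $0$, with $\tilde\gamma$ the $\tilde g$-geodesic), estimating the $\tilde M$-contribution using $f(t)\le f(0)$ and the fact that $f(t)\le e^{t}\cdot(\text{something})$ — this is where the precise form of $f_T$ (in particular the exponential-type bound $f_T(t)\le e^{1+t}$ near the relevant range, which makes the threshold $e^{-1-t_0}-3$ appear) gets used. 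Comparing total lengths forces $\ell\ge e^{-1-t_0}-3$, contradicting the hypothesis; hence $t'>0$ throughout, in particular $t'(0)>0$, and combined with the monotonicity of $t'$ this gives that $s\mapsto t(s)$ is monotone. The main obstacle I anticipate is making the length comparison in this last step fully rigorous: one must control how far below $t_0$ the geodesic can dip and carefully track the constant "$3$" and the exponential factor coming from $f_T$, rather than just the qualitative sign of $f'$. I would handle this by integrating the conservation law $f(t(s))|\tilde\gamma'(s)| = \sqrt{c}$ to relate the $\tilde g$-distance traversed to $\int f(t(s))^{-1}\,ds$ and then use the explicit bounds on $f_T$ to close the estimate.
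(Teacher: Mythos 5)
This lemma is not proved in the paper at all: the authors import it verbatim from Grimaldi--Pansu (\cite{GP}) and only use its conclusion, so there is no in-paper proof to compare yours against. Judged on its own terms, your argument for part (1) is essentially correct and complete once the reparametrization is done properly: parametrize the competitor $\tilde\sigma$ proportionally to the $\tilde g$-arclength of $\tilde\gamma$, so that $|\tilde\sigma'(s)|\le\frac{L(\tilde\sigma)}{L(\tilde\gamma)}|\tilde\gamma'(s)|$ pointwise, and conclude by strict monotonicity of the length integrand; this is the standard route.

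Part (2), however, is a plan rather than a proof, and the plan has three concrete problems. First, the conservation law is misquoted: for $dt^2+f(t)^2\tilde g$ the fiber equation gives $f(t(s))^2\,|\tilde\gamma'(s)|_{\tilde g}=\mathrm{const}$, not $f^2|\tilde\gamma'|^2=\mathrm{const}$. This is harmless for the sign of $t''=f f'|\tilde\gamma'|^2$ (which already follows from the geodesic equation alone), but it is load-bearing exactly where you propose to use it: the $\tilde g$-distance traversed is $\mathrm{const}\cdot\int f(t(s))^{-2}\,ds$, not $\int f(t(s))^{-1}\,ds$, and the quantitative threshold changes accordingly. Second, your convexity picture assumes $f'\ge 0$ on $[-T,0]$; in the collar actually used here $f$ equals $\lambda_j\ge 1$ at $t=-T$ and $1$ at $t=0$, so $f$ is non-increasing in $t$, $t(\cdot)$ is concave rather than convex, and the case to exclude is not ``$t$ dips below $t_0$ and comes back'' but rather $t'$ becoming nonpositive near the far endpoint; the case analysis must be redone for the correct sign. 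Third, and most importantly, the explicit bound $d_{\tilde g}(m,m')<e^{-1-t_0}-3$ can only come out of the explicit exponential-type warping function of \cite{GP}; the lemma as transcribed here does not even specify $f$, and the present paper replaces Grimaldi--Pansu's $f_T$ by a different (logistic-type) function, so the stated threshold is tied to the original choice and your comparison estimate cannot be closed without fixing $f$ and tracking the constants explicitly. Until that length comparison is actually carried out for a specified $f$, the heart of part (2) remains unproved.
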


This Lemma is used to help us choose a value of $T$ large enough such that each component of $Q_j$ satisfies the distance properties mentioned in the Proposition. Here, $T$ varies with the variation of the pieces $Q_j$.
 
Let $D_j$ be the maximum of the diameter of all the components of $Q_j$ with respect to the metric $m_j$. Let that component be $Q_{ji}$. Thus, $D_j=$ diameter$(Q_{ji})$. Then, the diameter of $Q_{ji}$ with respect to the metric $g_{j,T}$ lies between $2T$ and $2T+3D_j$. If we choose $T$ such that $T>2D_j$, then any point of $Q_{ji}$ which lies at maximum distance from the boundary $\partial^-{Q_{ji}}$ lies in the $T-$tubular neighbourhood of $\partial^+{Q_{ji}}$. Let this point be $q$.

In the $T-$tubular neighbourhood of $\partial^+Q_{ji}$, $[-T,0]\times \partial^+Q_{ji}$, let $q$ have the coordinates $(t_0,m)$. Then $T+t_0<D_j$. If $p$ is the point on $\partial^- Q{ji}$ that lies at a minimum distance from $q$, then according to the Lemma mentioned above, all minimizing geodesics from $p$ to $q$ point away from the embedded copies $t\times \partial^+ Q_{ji}$ of $\partial^+ Q_{ji}$ in $[-T,0]\times \partial^+Q_{ji}$. Hence, decreasing the value of $t_0$, that is, pulling $q$ forwards towards $0\times \partial^+ Q_{ji}$ would decrease the distance between $p$ and $q$. Thus, we must have $t_0=-T$, which gives us that with the metric $g_{j,T}$, $q$ lies on the boundary $\partial^+Q_{ji}$.

Every point of $Q_{ji}$ is at a maximum distance of $2T+D_j$ from $\partial^-{Q_{ji}}$ by the construction. The minimum distance of a point of $\partial^+Q_{ji}$ from $\partial^-Q_{ji}$ is at least $T$. Thus, for $T>D_j$, $$T \leq t_{Q_{ji}} \leq T_{Q_{ji}} \leq 2T + 3D_j \leq 3T.$$
For all other components $Q_{jk}$ of $Q_j$, $D_{jk}\leq D_j$. Thus, the same choice of $T$ satisfies the equation $T\leq t_{Q_{jk}}\leq T_{Q_{jk}}\leq 3T$ by the same arguments. Hence, we can choose $T$ for $Q_j$ such that on each component of $Q_j$, the required property is satisfied. That is, on $Q_j$, $T\leq t_{Q_j}\leq T_{Q_j}\leq 3T$. Finally, fix $T$ large enough such that the stated property is satisfied on each component of $Q_j$, as well as $Q_j$ becomes a metric of bounded geometry (i.e, $|\frac{f''}{f}|\leq 1$).

Now we can choose an integer $t_j$ such that $\frac{1}{3}lt_j\leq t_{Q_j}\leq T_{Q_j}+2$diameter$(\partial Q_j)\leq lt_j$. This concludes the determination of the length and distance properties mentioned on points $1, 2, 4, 5, 9, 10, 11$ of Proposition \ref{p13} on the pieces $Q_j$. Now we need to suitably choose a metric on the $R_j$, and determine the volume parameters.

For the metric on $R_j$, recall that $R_j$ is defined as the product of $\partial^+Q_j$ with an interval with a ball removed from one component, we consider product metrics on some tubular neighbourhoods of all the boundary components of $R_j$ such that the components diffeomorphic to $\partial^+ Q_j$ become isometric to it. This metric is now extended to a metric of bounded geometry on the entire $R_j$. Recall that for a piece $P$, $v'_P=v_P(k)-v_P(k-1)$ where the volume $v_P(k)$ is the sum volumes of $P$ on the $k$-tubular neighbourhood of $\partial^-P$ across all the components of $P$. Define $u_j=$max$\{v'_{R_j}\}$. Now we can define $U_j$ as $U_j=$max$\{v'_{Q_j},u_j\}$.

This ensures that all the properties mentioned in the Proposition hold for all pieces.
\qed

\subsection*{Proof of the Theorem \ref{t3}}:

Given any bgd-function $v$, we can choose a bgd-function $w$ of the same growth type as $v$ such that $2 \leq w(n + 2) - w(n + 1) \leq 2(w(n + 1) - w(n))$. Hence, we can assume without loss of generality that the given bgd-function $v$ satisfies $2 \leq v(n + 2) - v(n + 1) \leq 2(v(n + 1) - v(n))$.

\begin{lem}\label{lw}
Let $v : \mathbb{N} \to \mathbb{N}$ satisfy:
\begin{itemize}
\item $v(0) = 1$.
\item For all $n \in \mathbb{N}$, $2 \leq v(n + 2) - v(n + 1) \leq 2(v(n + 1) - v(n))$.
\item $v(n) = O(\lambda^n)$ for some $\lambda < 2$.
\end{itemize}
Fix a subset $S \subset N$ of vanishing lower density, i.e. $\liminf_{n\to \infty}
\frac{|S \cap \{0,...,n\}|}{n} = 0$.
There exists an admissible rooted tree $\mathcal{T}_{S,v}$ with bounded geometry and with growth exactly $v$ at the root.
\end{lem}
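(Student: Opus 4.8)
The plan is to construct the tree $\mathcal{T}_{S,v}$ explicitly by levels, using the values $v(n)$ to dictate how many vertices appear at each level and using $S$ to control where branching is suppressed. First I would set up the combinatorial skeleton: the tree has a distinguished infinite ray (the \emph{trunk}) $v_0, v_1, v_2, \dots$ with $v_0$ the root, and I build the rest of the tree inductively so that the number of vertices at level $n$ — equivalently, the ``discrete ball'' of radius $n$ about the root — has cardinality comparable to $v(n)$. Since the hypothesis gives $2 \le v(n+2)-v(n+1) \le 2(v(n+1)-v(n))$, the increments $\delta(n) := v(n+1)-v(n)$ form a sequence that is bounded below by $2$ and grows at most geometrically with ratio $2$; this is exactly what makes a binary (at-most-two-branches) tree feasible, because at each level the population can at most double. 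Concretely, at level $n$ I would have the tree contain $v(n+1)-v(n)$ ``active'' leaves (the vertices that will have descendants) together with whatever inactive leaves are needed so the total count of vertices at level $n$ matches $v(n)$ up to the additive constant coming from $v(0)=1$; an active vertex either carries one child (keeping the population flat) or two children (increasing it). Because $\delta(n+1) \le 2\delta(n)$ we never need more than doubling, and because $\delta(n+1) \ge 2 > \delta(n) \cdot \tfrac12$ is automatic from $\delta(n+1)\ge 2$ and the geometric upper bound, we can always realize the prescribed increments; the half-integer bookkeeping (pairing up which active vertices branch) is routine.

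Next I would handle the role of $S$: the requirement is that the vertices of the trunk which have only one branch are exactly those with level in $S$. So along the trunk, at level $n \in S$ the trunk vertex $v_n$ has a single child (namely $v_{n+1}$, continuing the trunk, and no side branch), while at level $n \notin S$ the trunk vertex has two children ($v_{n+1}$ plus one extra subtree sprouting off). The extra population increments $\delta(n)$ beyond what the trunk contributes are distributed among the off-trunk vertices, each of which is allowed up to two branches with no constraint tied to $S$. I need to check that this is consistent with matching $v(n)$: on the steps $n \in S$ the trunk contributes no new branching, but off-trunk vertices can still branch, so the population can still grow — this is fine precisely because $S$ has vanishing lower density, so the ``lost'' branching opportunities on the trunk are negligible and do not prevent the off-trunk part from carrying the required growth. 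Here the condition $v(n) = O(\lambda^n)$ with $\lambda < 2$ enters: it guarantees the tree is not forced to branch at literally every vertex, leaving enough slack to insert the single-branch trunk vertices at the prescribed places in $S$ and also to pad with inactive leaves so the cumulative count lands exactly on $v(n)$.

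Then I would verify the two remaining properties. \emph{Bounded geometry} of the tree means bounded vertex degree and, in the metric-tree sense, a uniform lower bound on edge lengths and injectivity-radius-type control; with all edges of length $1$ and every vertex of degree at most $3$ (at most two children plus one parent) this is immediate from the construction. \emph{Admissibility} and \emph{growth exactly $v$ at the root} is the statement that $\#\{$vertices at distance $\le n$ from the root$\} = v(n)$ for all $n$ (or at least agrees on the nose after the normalization $v(0)=1$), which is built in by the inductive choice of how many vertices to place at each level. I would close by noting the existence of such a tree is essentially Lemma~10 of \cite{GP}; the only point requiring care in the multiple-ends setting is that the same tree will later carry disconnected pieces $Q_j$, but the tree itself is unchanged, so the cited construction applies verbatim once the bookkeeping above is spelled out.

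The main obstacle I anticipate is the simultaneous bookkeeping: making the level-$n$ vertex count equal $v(n)$ \emph{exactly} while (i) respecting the at-most-two-branches constraint, (ii) forcing trunk vertices at levels in $S$ (and only those) to be single-branch, and (iii) not running out of ``room'' — i.e. ensuring that the number of active vertices one is allowed at level $n$ (bounded by the previous count, hence by $v(n)$, hence by $O(\lambda^n)$) is always at least the number $\delta(n)$ of branchings demanded. The inequality $\delta(n+1) \le 2\delta(n)$ is what prevents a sudden demand for more branchings than there are vertices to branch, and the vanishing density of $S$ is what keeps the trunk's forced single-branch steps from choking the growth; assembling these into a clean induction is the real content, and everything else is either definitional or a direct appeal to \cite{GP}.
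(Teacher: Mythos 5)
You should first note that the paper does not actually prove this lemma: it is a verbatim import of Lemma 10 of \cite{GP}, and the text simply refers the reader there for the construction. Your level-by-level binary construction (a trunk plus side branches, with the increments $\delta(n)=v(n+1)-v(n)$ dictating how many vertices to place at each level and $S$ marking the single-branch trunk vertices) is the same strategy as the cited proof, so in outline you are reconstructing the right argument rather than proposing a different one.

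There are, however, two concrete problems. First, the bookkeeping is inconsistent: for the ball of radius $n$ about the root to contain $v(n)$ vertices, the number of vertices \emph{at} level $n$ must be $v(n)-v(n-1)$, yet you repeatedly identify the level count with $v(n)$ itself (``the number of vertices at level $n$ --- equivalently, the discrete ball of radius $n$''), and the inequality ``$\delta(n+1)\ge 2>\delta(n)\cdot\tfrac{1}{2}$ is automatic'' is simply false once $\delta(n)\ge 4$. Second, and more seriously, the feasibility of hitting the count \emph{exactly} at a level $n\in S$ is never established. Writing $N(n)=v(n)-v(n-1)$ for the number of vertices at level $n$, the children available from level $n$ number at most $2N(n)$ in general, but only $2N(n)-1$ when $n\in S$, since the trunk vertex then carries a single branch; if $N(n+1)=2N(n)$ --- which the hypothesis $N(n+1)\le 2N(n)$ permits --- the construction is stuck, because every vertex at level $n$ would have to carry two children, contradicting $n\in S$. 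Your appeal to the vanishing lower density of $S$ does not resolve this: density controls averages, whereas the growth must equal $v$ exactly at every level, and a single bad level breaks the induction. This is precisely the point at which the hypotheses $v(n)=O(\lambda^n)$ with $\lambda<2$ (which limits how often $\delta$ can double) and the structure of $S$ must be exploited quantitatively, as in \cite{GP}, and the proposal never does that work. A similar unexamined boundary case occurs at the root: nothing in the stated hypotheses forces $v(1)\le 3$, yet a root with at most two branches admits at most two vertices at level $1$, so the normalization of $v$ has to be invoked there as well.
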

Given the function $v$, this Lemma gives us a tree $\mathcal{T}_{S,v}$ subject to the choice of the subset $S$, such that $\mathcal{T}_{S,v}$ has growth exactly $v$ at the root. For any admissible tree $\mathcal{T}$ with growth $v$, we get a Riemannian manifold $R_\mathcal{T}$ by joining the various pieces according to the pattern given by $\mathcal{T}$. We can define a function $r:R_\mathcal{T}\to \mathbb{N}$ where $r$ is defined as the following. If $P = Q_j$ and $x\in P$, $r(x)=\lfloor d(x,\partial^-Q_j)\rfloor+n_jl$. If $P$ is any other type of piece, attached at a vertex of $\mathcal{T}$ of level $n$, and $x \in P$, $r(x) = \lfloor d(x, \partial^-P )\rfloor + nl$. Then the discrete growth function $z$ is defined, for $n \in \mathbb{N}$, as \begin{equation}\label{e1}
    z(n)=\text{vol}\{x\in R_\mathcal{T} |r(x)\leq n\}.
\end{equation}    

The following Lemma lets us choose $S$ as per our requirements. Note that this choice depends only on the sequences $\{U_j\}, \{u_j\}, \{d_j\}$ and $\{ l_j\}$, and not on the number of components of $Q_j$.

\begin{prop}\label{lnj}
    Let $v : \mathbb{N} \to \mathbb{N}$ be a bgd-function that satisfies the assumptions stated in Lemma \ref{lw}.
    
Let $t_j$, $u_j$, $d_j$ be the parameters of the pieces $Q_j$, as provided by the Proposition. Assume that
\begin{itemize}
    \item either $lim_{n \to \infty} v(n + 1) - v(n) = +\infty$;
    \item $u_j$ is constant.
\end{itemize}
Then there exists an increasing sequence $n_j$ such that
\begin{enumerate}
    \item $n_j \leq d_j$;
    \item the subset $S = \bigcup_j [n_j , n_j + t_j - 1]$ has vanishing lower density;
    \item the discrete growth function $z$ of the corresponding Riemannian manifold $R_{\mathcal{T}_{S,v}}$ has the same growth type as $v$.
\end{enumerate}
\end{prop}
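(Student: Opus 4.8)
## Proof Proposal for Proposition \ref{lnj}

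The plan is to reduce the statement to the corresponding assertion in \cite{GP} (their Proposition 17) by verifying that the construction of the sequence $\{n_j\}$ there depends only on the four sequences $\{U_j\}, \{u_j\}, \{d_j\}, \{l_j\}$ of parameters attached to the pieces, and never on the internal structure (in particular the number of components) of the $Q_j$. Concretely, I would recall that the discrete growth function $z$ is completely determined by how the tree $\mathcal{T}_{S,v}$ distributes mass across levels, together with the per-level volume increments: on the interval $[n_j, n_j+t_j-1]$ of the trunk one attaches a copy of $Q_j$, contributing at most $U_j$ to each discrete level, while on the remaining trunk vertices one attaches $R_j$, contributing $u_j$ per level; the side branches carry the pieces $K, HS, J$ whose increments lie in $[h,H]$. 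Since $v'_{Q_j}$ and $v'_{R_j}$ were \emph{defined} in Proposition \ref{p13} (point 7 and point 8) as sums over all components, these bounds are already the correct "total" bounds, and the number of components of $Q_j$ has been absorbed into $U_j$. Thus the recursion that produces $\{n_j\}$ — one chooses $n_j$ large enough that the contribution of the "expensive" blocks $[n_j,n_j+t_j-1]$ is dominated by the growth of $v$ between consecutive good levels — is formally identical to the one in \cite{GP}.

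The steps, in order, would be: (i) state precisely the condition that $\{n_j\}$ must satisfy for $z \sim v$, namely that on each block $[n_j, n_j+t_j-1]$ the accumulated discrete volume $\sum U_j \cdot (\text{number of trunk vertices in the block at that level})$ stays comparable to $v(n_j + t_j) - v(n_j)$, up to the fixed growth constant; (ii) use the hypothesis — either $v(n+1) - v(n) \to \infty$, or $u_j$ constant — to show that one can always find $n_j$ large enough to absorb the jump caused by inserting $Q_j$: in the first case because the per-level increment of $v$ eventually exceeds any fixed $U_j$, in the second case because a constant $u_j$ means the cost of the $R_j$-vertices does not grow and the block of $Q_j$'s can be handled by a bounded dilation of the level scale; (iii) choose the $n_j$ recursively, at each stage taking $n_j$ larger than the previous $n_{j-1} + t_{j-1}$, larger than whatever threshold step (ii) demands, and — crucially — no larger than $d_j$, which is possible because $d_j$ (the diameter parameter) grows at least as fast as the thresholds, a point already implicit in \cite{GP}; (iv) verify vanishing lower density of $S = \bigcup_j [n_j, n_j+t_j-1]$: since $\{n_j\}$ can be taken to grow much faster than $\{t_j\}$ (we have freedom to enlarge $n_j$ at will within the $\le d_j$ constraint), the density $|S \cap \{0,\dots,n\}|/n \to 0$ along a subsequence; (v) conclude $z \sim v$ with a growth constant depending only on the listed parameters and on $v$, by the two-sided estimate comparing $z$ to the level-counting function of $\mathcal{T}_{S,v}$, which is $v$ by Lemma \ref{lw}.

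The main obstacle I anticipate is step (iii): ensuring simultaneously that $n_j \le d_j$ and that $n_j$ is large enough to control the volume jump from $Q_j$. In \cite{GP}'s single-end setting the interplay between the diameter parameter $d_j$ and the volume parameter $U_j$ already had to be tracked, but with multiple components the diameters of the separate pieces $Q_{ji}$ all feed into $d_j$ via the maximum, and one must check that the choice of $T$ made in the proof of Proposition \ref{p13} (which forces $T > 2D_j$) keeps $d_j$ large enough relative to $U_j$ — i.e., that thickening each $Q_{ji}$ enough to satisfy the distance axioms does not let the total volume $U_j$ outrun $d_j$. The resolution is that $d_j$ is chosen \emph{after} $U_j$ (or can be freely enlarged, since enlarging the collar only increases diameter while the volume increment per unit length stays bounded by bounded geometry), so the constraint $n_j \le d_j$ is never the binding one. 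Once this is pinned down, the rest is a transcription of the argument in \cite{GP}, and the claim that the growth constant is independent of the number of components follows because every quantity entering the estimate has been repackaged into $\{U_j\}, \{u_j\}, \{d_j\}, \{l_j\}$.
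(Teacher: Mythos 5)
Your proposal matches the paper's treatment: the paper gives no independent proof of this proposition but imports Proposition 17 of \cite{GP} essentially verbatim, justified by exactly the observation you make, namely that the parameters $U_j$, $u_j$, $d_j$ were redefined in Proposition \ref{p13} as sums (resp.\ maxima) over all components of $Q_j$, so the recursive choice of $\{n_j\}$ depends only on these sequences and not on the number of components or their internal structure. One small caution: as the distance estimates in the proof of Theorem \ref{t3} show (they require $\mathrm{diameter}(\partial^- Q_m)\leq n_m l$), the intended constraint is $d_j\leq n_j$ rather than $n_j\leq d_j$ as printed, so the compatibility worry you raise in step (iii) is resolved simply by taking $n_j$ large --- which is also what the volume-absorption and vanishing-density conditions demand.
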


Here, the condition $\lim_{n \to \infty} v(n + 1) - v(n) = +\infty$ can be shown to be equivalent to the condition that $\lim_{n\to \infty} \frac{v(n)}{n} = +\infty$. Thus, we have a choice of vertices $S$ of the trunk, and we can choose a manifold $R_{\mathcal{T},v}$ according to the procedure mentioned at the beginning of this section. Using Proposition \ref{p13} we get a Riemannian metric on each piece such that if a piece $P$ is attached on a piece $P'$, the boundary components on which they are attached are isometric and have product metric in some neighbourhood. We attach the pieces in this manner to define a Riemannian metric on $R_{\mathcal{T},v}$ such that the discrete growth function $z$ on $R_{\mathcal{T},v}$ lies in the same growth class as $v$. Note that $R_{\mathcal{T},v}$ is in this case diffeomorphic to the original manifold $M$. We now need to only show that the volume growth function of $R_{\mathcal{T},v}$ also lies in the desired growth class.

To prove the Theorem, it is enough to show that the volume growth function of $R_{\mathcal{T},v}$ lies in the same growth class as $z$, since we already know that $z$ lies in the same growth class as $v$. Choose a base point $o$ on $R_{\mathcal{T},v}$ such that $o$ lies on the piece attached to the root vertex of the tree $\mathcal{T}$. Let $w(n)=$vol$(B(o,n))$ in $R_{\mathcal{T},v}$ with the chosen Riemannian metric. Recall the function $r:R_{\mathcal{T},v}\to \mathbb{N}$. If a point $x$ lies on a piece $P$ of $R_{\mathcal{T},v}$ at level $n$ of $\mathcal{T}$, then $r(x)=\lfloor d(x,\partial^- P)\rfloor + nl$ if $P$ is not of the type $Q_j$ for some $j$, $r(x)=\lfloor d(x,\partial^- P)\rfloor + n_jl$ if $P=Q_j$ and $n_j\leq n < n_j+t_j$.

Choose a shortest geodesic between $o$ and $x$. Such a geodesic passes through some pieces $P_i$ of $R_{\mathcal{T},v}$, where $P_i$ lies on level $i$ of the tree. If the line passes through some piece of the form $Q_j$, i.e if $P_i=Q_j$ for some $i$ then by construction $P_k=Q_j$ for all $n_j\leq k<n_j+t_j$. If $P_i$ consists of multiple components then the line passes through exactly one component of $P_i$ since it is a length minimising geodesic. Let us consider the points $y_{P_i}$ which are the marked points in $\partial^- P_i$ lying in those components through which the geodesic passes. In case $P_i=Q_j$, we get only one marked point for all the $P_i$ where $n_j\leq i < n_j+t_j$. Call those selected points $y_0=o,y_1,y_2,\dots ,y_k\in \partial^- P$. Let $y_{k+1}$ be the point in $\partial^- P$ which is the closest to $o$. Consider the path generated by connecting each $y_i$ to $y_{i+1}$ via minimising geodesics, such that $y_k$ and $y_{k+1}$ are connected via a geodesic lying entirely in $\partial^-P$. Since the distance between $o$ and $x$ is less than the length of this path, we have the bound
$$d(o,x)\leq \Sigma_{i=0}^k d(y_i,y_{i+1})+d(y_{k+1},x).$$
Based on the bounds in the Proposition, we know $d(y_i,y_{i+1})\leq l$ unless $y_i$ is a piece of type $R_j$ for some $j$ and $y_{i+1}$ is a piece of type $HS,$ $K$, or $J$. This is because in these cases, $y_{i+1}$ does not lie on $\partial^+ R_j$. But this might happen only once in the entire path, and in such a situation the revised estimates hold that $d(y_i,y_{i+1})\leq l + $diameter$(\partial^- R_j)\leq l + n_jl$.

If the piece $P$ is of the type $Q_m$ for some $m$, then $\Sigma_{i=0}^{k-1} d(y_i,y_{i+1})\leq 2n_ml$. Since $y_{k+1}$ lies on $\partial^- Q_m$, $d(y_k,y_{k+1})\leq $diameter$(\partial^- Q_m)\leq n_ml$. Thus we get, 
$\Sigma_{i=0}^k d(y_i,y_{i+1})\leq 3n_ml$. Hence,
$$d(o,x)\leq 3n_ml + (r(x)-n_ml)\leq 3r(x).$$
If $P$ is not $Q_m$ for any $m$, $\Sigma_{i=0}^{k-1} d(y_i,y_{i+1})\leq 2nl$, and $d(y_k,y_{k+1})\leq l$, and so 
$$d(o,x)\leq (2n+1)l+(r(x)-nl)\leq \frac{n+1}{n}r(x)\leq 3r(x).$$
For the converse direction, again consider a distance minimizing geodesic $\gamma$ between $o$ and $x$, and let it pass through $n$ pieces (counting each $Q_j$ as $t_j$ pieces). Let $\{s_1, s_2,\dots,s_n\}$ be the values of $s$ such that $\gamma(s)$ lies on $\partial^- P'$ for some piece $P'$. Take $s_0=o$. Then $d(\gamma(s_1),\gamma(s_{i+1}))\geq \frac{l}{3}$, or  $d(\gamma(s_1),\gamma(s_{i+1}))\geq \frac{lt_j}{3}$ if the piece is of the type $Q_j$ for some $j$. And we also have $d(\gamma(s_k),x)\geq d(\partial^-(P),x)$. Therefore we get, if $P$ is of the form $Q_m$ for some $m$,
$d(o,x)\geq \frac{1}{3}n_ml+d(\partial^-(P),x)\geq\frac{1}{3}n_ml+(r(x)-n_ml)\geq \frac{1}{2}r(x)$,
and in other cases, $d(o,x)\geq \frac{1}{3}nl+d(\partial^-(P),x)\geq\frac{1}{3}nl+(r(x)-nl)\geq \frac{1}{3}r(x)$. 

Hence, combining the two directions, 
$$\{x\in R| r(x)\leq \frac{n}{3}\}\leq B(o,n)\leq \{x\in R| r(x)\leq 3n\}.$$
We also have $z(\frac{n}{3})\leq$vol$B(o,n)\leq z(3n)$.

Thus, the volume of $R_{\mathcal{T},v}$ is of the same growth type as $z$, which is of the same growth type as $v$, and so $R_{\mathcal{T},v}$ is our required Riemannian manifold diffeomorphic to $M$ with volume growth in the class of $v$. \qed

\begin{rem}
    If $M$ is an open manifold with countably many compact boundary components, then the same construction as above holds. The metric can be chosen such that it is a product metric in a neighbourhood of each boundary component. We choose an exhaustion such that each $Q_j$ has only finitely many boundary components, and construct a metric on $Q_j$ such that it is a product metric of bounded geometry near each of those boundary components, in a similar manner to the construction above. The rest of the proof follows in the same manner. 
\end{rem}

\section{Volume growth on connected sums along a tree}\label{s3}

Let us consider any finite collection of $m$-dimensional closed manifolds $\mathcal{U}$. Then, given any tree $T$, we can form infinitely many $m$-manifolds by taking connected sums of elements of $\mathcal{U}$ as described in \ref{connectedsum}. We can estimate the volume growth of all such manifolds for any given growth class.

\begin{thm}\label{graph1}
    Let $T$ be any infinite tree without any finite branches, and let $\mathcal{U}$ be a finite collection of $m$-dimensional closed manifolds. Then, given a bgd-function $v$, and a manifold $M$ which is a connected sum of elements of $\mathcal{U}$ along $T$, there exists a Riemannian metric of bounded geometry on $M$ such that the volume growth $v_1$ of $M$ (based on a point chosen on the manifold placed on the root of $T$) lies in the growth class of $v$. The metric can be chosen such that the growth constant of $v$ and $v_1$ is bounded above by a constant depending only on $T$, $v$, $\mathcal{U}$.
\end{thm}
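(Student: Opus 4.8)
The plan is to rerun the construction behind the proof of Theorem \ref{t3}, arranging that every choice made along the way depends only on $T$, $\mathcal{U}$, and $v$, and never on which element of $\mathcal{U}$ is placed at which vertex of $T$. Fix a root of $T$ and take the exhaustion $\mathcal{A}_j$ given by the union of the closed pieces sitting on the vertices at distance $\le j$ from the root together with the connecting tubes; then $Q_j=\mathcal{A}_{j+1}\setminus\mathcal{A}_j$ has one connected component per vertex of $T$ at distance $j+1$, each component being a copy of an element of $\mathcal{U}$ with one open ball removed for each edge at the corresponding vertex. Since the boundary spheres produced by a connected sum are standard spheres $S^{m-1}$, $\partial^-Q_j$ carries one $S^{m-1}$ per vertex at distance $j+1$ and $\partial^+Q_j$ one per vertex at distance $j+2$, and $\partial^+Q_j$ is identified with $\partial^-Q_{j+1}$ exactly as Proposition \ref{p13} requires, since every vertex at distance $j+2$ has a unique parent at distance $j+1$.

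The key point is that, because $\mathcal{U}$ is finite and the degree function of $T$ is independent of the labeling, the components of the $Q_j$ realize only finitely many diffeomorphism types in any prescribed finite range of levels, and only finitely many types in total when $\deg_T\le k$ everywhere. So I build the metrics of Proposition \ref{p13} by fixing, once and for all, one reference metric per diffeomorphism type of piece — including the filler pieces $R_j=\partial^+Q_j\times[0,1]$ with one ball removed and the trivial caps $K$, $HS$, $J$ — and using the same reference metric on every copy of that type. Then the height $l$, the scaling constants $\lambda_j$, the diameters $d_j$, and the volume parameters $u_j,U_j$ produced by Proposition \ref{p13} are functions of $T$ and $\mathcal{U}$ alone, bounded by constants depending only on $k$ and $\mathcal{U}$ in the bounded-degree case. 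The hypothesis of Proposition \ref{lnj} is met in either case: if $T$ has finitely many ends, the number of components of $Q_j$ is eventually constant and, after absorbing finitely many levels into $\mathcal{A}_{j_0}$, so is the diffeomorphism type of $R_j$, whence the $u_j$ can be taken constant; if $T$ has infinitely many ends, $M$ has infinite topological type, so by Theorem \ref{t3} the existence of the desired metric already requires $\lim_n v(n)/n=+\infty$, equivalently $\lim_n (v(n+1)-v(n))=+\infty$. Proposition \ref{lnj} then supplies a sequence $n_j$ (with $n_j\le d_j$, which we ensure by enlarging the diameter parameters), a set $S=\bigcup_j[n_j,n_j+t_j-1]$ of vanishing lower density, and the admissible tree $\mathcal{T}_{S,v}$ of Lemma \ref{lw}, such that the discrete growth function $z$ of $R_{\mathcal{T}_{S,v}}$ has the same growth type as $v$ with growth constant depending only on the parameters above and on $v$, hence only on $T$, $\mathcal{U}$, $v$.

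It remains to identify $R_{\mathcal{T}_{S,v}}$ with $M$ and to pass from $z$ to the volume growth. The diffeomorphism $R_{\mathcal{T}_{S,v}}\cong M$ is obtained exactly as in Theorem \ref{t3}: the pieces $Q_j$ lie along the trunk of $\mathcal{T}_{S,v}$ in the same linear gluing pattern by which they are glued in the exhaustion of $M$, and every filler piece is topologically trivial to attach ($R_j$ is a product cylinder with a capped-off ball, and $K$, $HS$, $J$ are cylinders or spheres). The distance estimate from the proof of Theorem \ref{t3}, run verbatim with the marked points $y_{P_i}$ on the boundary components through which a minimizing geodesic passes, gives $z(n/3)\le\mathrm{vol}\,B(o,n)\le z(3n)$ for a base point $o$ on the root piece, so the volume growth of $R_{\mathcal{T}_{S,v}}$ and $z$ have the same growth type with growth constant $3$. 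Composing the two growth-type equivalences bounds the growth constant of the volume growth and $v$ by a fixed multiple of the constant from Proposition \ref{lnj}, hence by a constant depending only on $T$, $\mathcal{U}$, $v$, which collapses to a dependence on $k$ alone in the bounded-degree case and recovers the refinement of Theorem \ref{graph}.

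I expect the main obstacle to be the bookkeeping of the second paragraph: forcing the piece metrics — and in particular the volume and diameter parameters that drive Proposition \ref{lnj} — to be genuinely independent of the vertex labeling, which requires working with the finitely many diffeomorphism types obtained from an element of $\mathcal{U}$ by removing a bounded number of balls, rather than with individual pieces, and then tracking that the growth constant emitted by Proposition \ref{lnj} is a function of those type-level data only. Everything else is a transcription of the multi-ended argument already established above.
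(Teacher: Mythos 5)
Your proposal is correct and follows essentially the same route as the paper: the same level-by-level exhaustion of the connected sum, the same device of making the height, diameter, and volume parameters of Proposition \ref{p13} depend only on the finitely many diffeomorphism types (elements of $\mathcal{U}$ with a bounded number of balls removed at each level) rather than on the labeling, the same appeal to Proposition \ref{lnj} to choose $\{n_j\}$ independently of the labeling, and the same final composition of growth constants via the factor $3$ from the distance estimate. If anything, you are slightly more explicit than the paper in verifying which hypothesis of Proposition \ref{lnj} applies (constant $u_j$ for finitely many ends versus $\lim_n(v(n+1)-v(n))=+\infty$ forced by Theorem \ref{t3}(2) for infinite topological type), which is a welcome addition rather than a deviation.
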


\begin{proof}
    The first statement follows from Theorem \ref{t3}.
    Let the tree $T$ have $F(j)$ vertices on level $j$. Then, $F(j)$ number of disjoint components are attached at the level $j$. We try to rearrange the tree in the form of an admissible tree in the sense of \cite{GP} in order to define a Grimaldi-Pansu metric on the manifold. For that, we take an exhaustion $\{U_i\}$ of $M$ such that $U_{j}$ is the union of all the pieces attached to the vertices until level $j$. Thus, $U_{j}$ consists of $\Sigma_{i=0}^jF(i)$ attached pieces from $\mathcal{U}$. Then, $U_{j+1}\setminus U_j$, which is $Q_j$, is made of $F(j)$ pieces from $\mathcal{U}$ and thus has $F(j)$ number of connected components.

    We first need to put metrics on the $Q_j$ such that each $Q_j$ has bounded geometry, and has product metric in some tubular neighbourhood of the boundaries. Let the cardinality of $\mathcal{U}$ be $\alpha$. By the construction, each connected component of $Q_j$ is an element of $\mathcal{U}$ with some discs removed. Let $Q_{ji}$ be one such component. Then $Q_{ji}$ is one of the $\alpha$ elements from $\mathcal{U}$, and the number of removed discs from the element is equal to the degree of the vertex represented by $Q_{ji}$. Let the degree be $k$. Then $\partial^- Q_{ji}$ consists of a single $(m-1)$-sphere, and $\partial^+ Q_{ji}$ is a union of $k-1$ disjoint $(m-1)$-spheres. For each element of $\mathcal{U}$ with $k$ discs removed, put a metric of bounded geometry such that the metric satisfies all the requirements in the Proposition \ref{p13}. Then on each such piece, the metric satisfies that the distance between any points lying on the positive and negative boundaries is between $\frac{1}{3}lt_{ji}$ and $lt_{ji}$ for some $t_{ji}$. Consider the maximum of the values of $t_{ji}$ across each of the $\alpha$ metrics on the $\alpha$ possible pieces, and thicken the boundaries on all the metrics so that each of them has the distance between the boundaries lying between $\frac{1}{3}lt_{ji}$ and $lt_{ji}$ for that maximum value of $t_{ji}$. Doing this for each component of $Q_j$, and again thickening boundaries if necessary, we get a value $t_j$ such that the distance between the boundaries for any component of $Q_j$ lies between $\frac{1}{3}lt_{j}$ and $lt_{j}$, irrespective of the choice of the element of $\mathcal{U}$ in that component.

    Now, put metrics on the other pieces such that they also satisfy all properties stated in the Proposition \ref{p13}. Consider $U_j$ to be the maximum of $v_{Q_j}'(s)=v_{Q_j}(s)-v_{Q_j}(s-1)$ across all the possible choices for the components of $Q_j$. Similarly, consider $d_j$ to be the maximum of the diameters of $(\partial^-Q_j)$ across all $\alpha$ possible choices for each component of $Q_j$. Then, $\{t_j\}$, $\{U_j\}$, and $\{d_j\}$ are independent on the choice of the element of $\mathcal{U}$ on any vertex of $T$.

    Now, following Proposition \ref{lnj}, there exists a sequence $\{n_j\}$ such that the associated discrete growth function $z$ has the same growth type as $v$. The sequence $\{n_j\}$ depends on the sequences $\{d_j\}$ and $\{t_j\}$, and hence can be chosen to be independent of the choices of the elements of $\mathcal{U}$ at the vertices of $T$. Also, by Lemma $11$ of \cite{GP}, there exists a bgd-function $w$ of the same growth type as $v$ such that $2\leq w(n+2)-w(n+1)\leq 2(w(n+1)-w(n))$. Then, by Lemma $16$ of \cite{GP}, the bounds of the function $z$ are given by
    $$(w(n)-w(n-1)-1)h\leq z(n)-z(n-1)\leq H(w(n)-w(n-1))+U_j$$
    where $h$ and $H$ are respectively the mimimum and maximum values of $v'_P$ across the pieces $P=HS, K, J$, as defined in the Proposition \ref{p13}. Thus, the bounds for $z$ are independent of the choice of the elements representing the vertices of $T$, and depend only on the set $\mathcal{U}$, the function $w$, and the tree $T$. Therefore, the growth constant of $w$ and $z$ is also independent of the choice of the representative elements of the vertices. Since the growth constant of $z$ and the volume function $v_1$ of $M$ can be chosen to be the constant $3$, and the growth constant between $w$ and $v$ depends only on $v$, we get that the growth constant between $v_1$ and $v$ is hence independent of the choice of the elements at the vertices of $T$ and depends only on $T$, $\mathcal{U}$, and $v$. This completes the proof.
\end{proof}

In the case where the degree of any vertex of a tree $T$ is bounded above by some constant $k$, we can consider $T$ as a subtree of the tree with degree $k$ at each vertex. Hence in such situations, the growth constant for any given bgd-function can be made independent of the exact choice of the tree, and depends only on the collection of the manifolds we attach to the vertices and the bound $k$. 

\begin{thm}\label{graph2}
    Let $T$ be an infinte rooted tree with no finite branches such that the degree of any vertex of $T$ is bounded by some integer $k$. Let $\mathcal{U}$ be a finite collection of $m$-dimensional closed and compact manifolds. Then, given a bgd-function $v$, and any manifold $M$ which is a connected sum of elements of $\mathcal{U}$ along $T$, there exists a Riemannian metric of bounded geometry on $M$ such that the volume growth $v_1$ of $M$ (based on a point on the manifold attached to the root of $T$) lies in the growth class of $M$. The metric can be chosen such that the growth constant of $v$ and $v_1$ is bounded above by a constant depending only on $v$, $k$, and $\mathcal{U}$.
\end{thm}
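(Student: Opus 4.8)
The plan is to re-run the argument of Theorem \ref{graph1} while arranging that every parameter it depends on is controlled by $k$, $v$, and $\mathcal{U}$ alone. The device is to compare $T$ with the ``universal'' rooted tree $T_k$ in which the root has $k$ children and every other vertex has $k-1$ children, so that every vertex of $T_k$ has degree at most $k$ and $T_k$ has no finite branches. One fixes an embedding of $T$ into $T_k$ as a rooted subtree containing the root; writing $F_T(j)$ and $F_k(j)$ for the numbers of vertices at level $j$ of $T$ and of $T_k$, this gives $F_T(j)\le F_k(j)$ for all $j$.

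First I would exploit the finiteness of the ``building blocks''. Every connected component of a $Q_j$ arising from $M$ is an element of $\mathcal{U}$ with $d$ open discs removed for some $1\le d\le k$, so there are only $|\mathcal{U}|\cdot k$ of them up to diffeomorphism, and all their boundary components are standard spheres $S^{m-1}$. Fixing once and for all a bounded-geometry metric on each building block, with a common round metric on every boundary sphere (so that the prescribed gluings are isometric), and thickening boundaries as in the proof of Proposition \ref{p13}, one may arrange that every building block has the same height parameter $\mathbf t$, the same boundary diameter $\mathbf d$, and $\max_s v'_{\mathrm{block}}(s)\le C$, where $\mathbf t,\mathbf d,C$ depend only on $k$ and $\mathcal{U}$. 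Installing these fixed metrics on the components of $Q_j$ (and the standard metrics on the $R_j$, $K$, $HS$, $J$), one gets $t_j=\mathbf t$ and $d_j=\mathbf d$; and since $v'_{Q_j}=\sum_i v'_{Q_{ji}}$ runs over $F_T(j)\le F_k(j)$ components, $v'_{Q_j}\le F_k(j)\,C$ and likewise $v'_{R_j}\le F_k(j+1)\,C$. One may therefore simply declare the parameter sequences to be $t_j:=\mathbf t$, $d_j:=\mathbf d$, $U_j:=F_k(j+1)\,C$, $u_j:=F_k(j+1)\,C$; these are legitimate upper bounds in the sense of Proposition \ref{p13}(7)--(8), and depend only on $k$ and $\mathcal{U}$, not on $T$.

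Feeding these $T$-independent sequences into Proposition \ref{lnj} produces a sequence $\{n_j\}$, a subset $S=\bigcup_j[n_j,n_j+t_j-1]$ of vanishing lower density, and the tree $\mathcal{T}_{S,v}$, all depending only on $k$, $v$, and $\mathcal{U}$, and the discrete growth function $z$ of $R_{\mathcal{T}_{S,v}}$ then satisfies, exactly as in the proof of Theorem \ref{graph1},
\[
\bigl(w(n)-w(n-1)-1\bigr)h \;\le\; z(n)-z(n-1)\;\le\; H\bigl(w(n)-w(n-1)\bigr)+U_j,
\]
where $w$ is the bgd-function of Lemma $16$ of \cite{GP} equivalent to $v$ and $h,H$ are the universal constants of Proposition \ref{p13}(6). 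Since $h$, $H$, the sequence $\{U_j\}$, and $w$ now depend only on $k$, $v$, and $\mathcal{U}$, the growth constant between $z$ and $w$ depends only on these; combining this with $z\sim_3 v_1$ and the $v$-dependent constant between $w$ and $v$ (Lemma $11$ of \cite{GP}) shows that the growth constant between the volume growth $v_1$ of $R_{\mathcal{T}_{S,v}}\cong M$ and $v$ depends only on $k$, $v$, and $\mathcal{U}$.

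The step I expect to be the real obstacle is verifying that Proposition \ref{lnj} genuinely applies to these $T$-independent sequences: its hypothesis asks that either $\lim_{n}\bigl(v(n+1)-v(n)\bigr)=+\infty$ or the sequence $u_j$ be (eventually) constant. When $k\le 2$, $T_k$ has at most two ends and $F_k(j)$ is bounded, so $u_j$ may be taken constant and there is nothing to check. When $k\ge 3$ and $\lim_n\bigl(v(n+1)-v(n)\bigr)=+\infty$, the first alternative holds and the argument above goes through verbatim; note this is exactly the regime forced by Theorem \ref{t3} whenever the connected sum has infinitely many ends. The remaining case — $k\ge 3$ and $v$ at most linear — only carries content for $M$ with finitely many ends, hence for $T$ with finitely many ends, where $F_T(j)$ is eventually constant; there one must abandon the $T_k$-parameters and work with the genuine, eventually-constant volume parameters $u_j:=F_T(j)\,C$ of $M$ and invoke the second alternative of Proposition \ref{lnj}. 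Tracking the dependence of $\{n_j\}$, and hence of the final growth constant, on those parameters, and making this consistent with the uniform bound claimed in the theorem, is the delicate point that would need the most care; the rest is a routine transcription of the proof of Theorem \ref{graph1}.
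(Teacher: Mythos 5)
Your proposal follows essentially the same route as the paper: fix bounded-geometry metrics on the finitely many building blocks (elements of $\mathcal{U}$ with at most $k$ discs removed), thicken boundaries to get constant height and diameter parameters, bound the number of components of $Q_j$ by the level-$j$ count of the degree-$k$ tree (so $U_j\le Ck^j$), feed these $T$-independent sequences into Proposition \ref{lnj}, and track the constants through the bounds on $z$. Your closing worry about the hypotheses of Proposition \ref{lnj} is well placed — the paper's own proof silently requires $w(n)-w(n-1)\ge U_j=k^jU$, i.e.\ the superlinear alternative, and your explicit case split is if anything more careful than the paper's treatment.
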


\begin{proof}
    Consider a rooted tree $T$ such that each vertex of $T$ has degree at most $k$, and let us attach elements of $\mathcal{U}$ at each vertex of $T$ to get the manifold $M$. We can take an exhaustion $\{U_i\}$ of $M$ such that $U_{j}$ is the union of all the pieces attached to the vertices until level $j$. Then $Q_j$=$U_{j}\setminus U_{j-1}$ is a disjoint union of all the pieces attached to vertices at level $j$.

    Similar to the previous theorems, we try to put metrics of bounded geometry on each element of $\mathcal{U}$ such that they satisfy the conditions in the Proposition \ref{p13}. Let the cardinality of $\mathcal{U}$ again be $\alpha$. Consider the sets $\mathcal{U}_i$ consisting of the elements of $\mathcal{U}$ with $i$ discs removed, where $0\leq i\leq k+1$. So, each element of $\mathcal{U}_i$ has $i$ boundary $(m-1)$-spheres, and $\mathcal{U}_0=\mathcal{U}$. Put a metric of bounded geometry on each element of $\mathcal{U}_i$ for all the $i$. Then we get $\alpha^{k+2}$ different pieces with Riemannian metrics of bounded geometry on them such that they are product metrics on some tubular neighbourhoods of each boundary component. Now, each $Q_j$ consists of a disjoint union of at most $k^j$ of these pieces, since the number of vertices on level $j$ is at most $k^j$. Again, thicken the boundary on each of these pieces so that the value of $t_j$ becomes constant for each $Q_j$, i.e, so that every component of each $Q_j$ satisfies $\frac{lt_0}{3}\leq t_{Q_j}\leq T_{Q_j}\leq lt_0$ for some integer $t_0$. Consider $d_0=$  $\max\{\text{diameter}(\partial^-P)\}$, where $P$ is one of the  $\alpha^{k+2}$ pieces which are elements of some $\mathcal{U}_i$. Define $\{d_j\}$ to be the constant sequence $\{d_0\}$, so that diameter$(\partial^-Q_j)\leq d_j$ for all $j$. Also, let $U$ be the maximum value of $v'_P$ across all the pieces $P$ in all $\mathcal{U}_i$, and let $u$ be the minimum value of $v'_P$. Note that $U$ and $u$ depend on the collection $\mathcal{U}$ and the maximum degree $k$. Then, since $Q_j$ has a maximum of $k^j$ disjoint components, $v'_{Q_j}\leq Uk^j$. For the other pieces of the form $R_j,$ $HS,$ $J$, and $K$, we put metrics of bounded geometry on them satisfying the conditions stated in the Proposition \ref{p13}. Let $h$ and $H$ be the minimum and maximum values of $v'_P$ where $P$ is any piece of the form $HS$, $K$, or $J$.
    
    By the above construction, we again get sequences $\{U_j\}$, $\{t_j\}$, $\{d_j\}$ independent of the choice of representative elements of the vertices, and also independent of the tree $T$. Thus, applying Proposition \ref{lnj}, there exists a sequence $\{n_j\}$ independent of the tree $T$ and the choice of elements at the vertices of $T$ such that the discrete growth function $z$ resulting after attaching the pieces to an admissible tree according to the sequence $\{n_j\}$ is of the same growth class as $v$. Thus, the same choice of the set $S$ works for all trees with the number of branches at any vertex bounded above by $k$. 

    Again using Lemma $11$ of \cite{GP}, we get a bgd-function $w$ of the same growth type as $v$, such that $2\leq w(n+2)-w(n+1)\leq 2(w(n+1)-w(n))$.
    Then, the function $z$ satisfies the following bounds:
    $$(w(n)-w(n-1)-1)h\leq z(n)-z(n-1)\leq H(w(n)-w(n-1))+U_j$$
    where $ln_j\leq n < ln_{j+1}$.
    $\{n_j\}$ can be chosen such that for all $ln_j\leq n \leq ln_{j+1}$, $w(n)-w(n-1)\geq U_j = k^jU$.
    Taking any constant $\beta<{u,h}$ and using that $w(n)-w(n-1)\geq U_j = k^jU$ for $ln_j\leq n \leq ln_{j+1}$ and that $w$ is an increasing function, the bounds can be modified to 
    $$(w(n)-w(n-1))\beta \leq z(n)-z(n-1)\leq (H+1)(w(n)-w(n-1))$$
    for all $n$.
    Substituting the values $w(0)=1$ and $z(0)=0$ and iterating, we get $$z(n)\leq (H+1)w(n),$$
    and $$(w(n)-1)\beta \leq z(n)$$
    which implies that
    $$w(n)\leq \frac{z(n)}{\beta}+1.$$
    Then, choosing $A=$ max$\{\frac{1}{\beta},H+1\}=$ max$\{\frac{1}{u},\frac{1}{h},H+1\}$, we get that $w$ and $z$ are in the same growth class with growth constant $A$. Also, by \cite{GP}, $z$ and the volume growth function of $M$ are in the same growth class with growth constant $3$, and $w$ and $v$ are in the same growth class with growth constant $L^{\frac{1}{l}}$, where $L$ satisfies $\frac{1}{L}\leq v(n+2)-v(n+1)\leq L(v(n+1)-v(n))$.  Then, the volume growth function of $M$ and $v$ lie in the same growth class, with growth constant $3L^{\frac{1}{L}}\times\max\{\frac{1}{u},\frac{1}{h},H+1\}$. Since $L, u, h,$ or $H$ do not depend on the choice of the tree $T$, the growth constant is independent of $T$ and depends only on $\mathcal{U}$, $k$, and $v$.

    Thus, on any such manifold $M$ formed from a tree with the degree of any vertex bounded above by $k$, we can put a Grimaldi-Pansu metric with the resulting volume growth function $v_1$ having the bounds $v_1(n)\leq Kv(Kn+K)+K$ with $K=3L^{\frac{1}{L}}\times\max\{\frac{1}{u},\frac{1}{h},H+1\}$.
\end{proof}
\subsection*{ Proof of Theorem \ref{graph} :}
    We again choose an exhaustion $U_j$ of $M$ such that $U_j$ consists of the pieces attached to the vertices until level $j$, and take $Q_j=U_j\setminus U_{j-1}$. If $T$ does not have any finite branch then the theorem follows from Theorem \ref{graph1} and Theorem \ref{graph2}. If $T$ has finite branches, then there exist components of some $Q_j$ such that there is no piece $Q_{j+1}$ or $R_j$ attached to that component. Consider such a component $Q_{ji}$. Then $\partial^- Q_{ji}$ is empty. To put a metric on $Q_{ji}$, put a metric of bounded geometry on $\partial^+ Q_{ji}$ by making it isometric to the component of $\partial^- Q_{j-1}$ to which it is diffeomorphic. Extend this metric to a metric of bounded geometry on the entire $Q_{ji}$ so that it is a product metric on some tubular neighbourhood of the boundary. Proceeding in a similar manner as the proof of Theorem \ref{t3}, we can get a Grimaldi-Pansu metric on $M$ such that the volume growth function lies in the growth class of $v$. Then, following a similar proof as in Theorem \ref{graph1} and Theorem \ref{graph2} the claim follows. 
\hfill $\square$

\begin{rem} Let $M$ be a connected open manifold which is a connected sum of manifolds from a finite set of closed $3$-manifolds $\mathcal{U}$ along a simple locally finite graph. Following a construction similar to Theorem $2.3$ in \cite{BBM}, we can replace the graph with some tree $T$ such that the manifold obtained by taking connected sum along $T$ is diffeomorphic to that obtained by taking connected sum along the graph. We can then use the Theorem \ref{graph} to get a bound for the growth constant, which shall depend only on the graph and the set $\mathcal{U}$ of closed manifolds.
\end{rem}

\begin{cor}
Let $M$ be a manifold as described in Theorem \ref{graph2}. For any bdg-function $v$ and a point $x\in M$ there exists a Grimaldi-Pansu metric $g_x$ on $M$ such that the volume growth function $v_x$ based at $x$ belongs to the class of $v$. The growth constant of $v$ and $v_x$ is bounded by a constant depending only on $v$, $\mathcal{U}$ and $k.$ 
\end{cor}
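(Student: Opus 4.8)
The plan is to reduce the statement to Theorem \ref{graph} by re-rooting the defining tree at the vertex carrying $x$. Given $x\in M$, the point $x$ lies on the piece $P_{\mathfrak v}$ obtained from some element $U_{\mathfrak v}\in\mathcal U$ by removing $\deg(\mathfrak v)\le k$ discs, where $\mathfrak v$ is a vertex of $T$. Let $T'$ denote $T$ re-rooted at $\mathfrak v$. Re-rooting alters no vertex degree, so $T'$ is again an infinite tree all of whose vertices have degree at most $k$, although it may now have finite branches (the portion of $T$ lying "above" $\mathfrak v$ need no longer be infinite in the new rooted sense); this is harmless, because Theorem \ref{graph} imposes no restriction on finite branches and still gives, in the bounded-degree case, a growth constant depending only on $k$, $v$, and $\mathcal U$. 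Since the local model of the connected sum at each vertex is unchanged, $M$ is also a connected sum of elements of $\mathcal U$ along $T'$. Applying Theorem \ref{graph} to $(M,T',\mathcal U,v)$ therefore produces a Grimaldi–Pansu metric $g_x$ on $M$ whose volume growth function $v_{x_0}$, based at a point $x_0$ on the piece $P_{\mathfrak v}$ attached to the root of $T'$, lies in the growth class of $v$ with a growth constant $C_1=C_1(v,\mathcal U,k)$.

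It then remains to replace the base point $x_0$ by $x$. Both points lie on $P_{\mathfrak v}$, which now sits at level $0$ of $T'$. In the construction underlying Theorems \ref{graph1}--\ref{graph2}, the metric that $g_x$ restricts to on $P_{\mathfrak v}$ is one of a finite list of bounded-geometry metrics, indexed by the element of $\mathcal U$ and the number $\le k$ of removed discs, thickened along its boundary by cylinders of a fixed height $t_0=t_0(\mathcal U,k)$ and with $\partial^-$-diameter at most $d_0=d_0(\mathcal U,k)$. Consequently the $g_x$-diameter of $P_{\mathfrak v}$ is bounded by a constant $D=D(\mathcal U,k)$, so $d_{g_x}(x,x_0)\le D$. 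Hence $B(x,r)\subseteq B(x_0,r+D)$ and $B(x_0,r)\subseteq B(x,r+D)$ for every $r$, which gives $v_x(n)\le v_{x_0}(n+D)$ and $v_{x_0}(n)\le v_x(n+D)$; since $n+D\le (D+1)n+(D+1)$, this exhibits $v_x$ and $v_{x_0}$ as being of the same growth type with growth constant $D+1$. Composing this with the comparison of $v_{x_0}$ and $v$ from the previous paragraph, and using that "same growth type" is an equivalence relation whose growth constants combine to a quantity bounded in terms of the two inputs $C_1$ and $D+1$, we conclude that $v_x$ and $v$ lie in the same growth class with a growth constant depending only on $v$, $\mathcal U$, and $k$. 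As $g_x$ is a Grimaldi–Pansu metric on $M$, this is exactly the assertion of the corollary.

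The one step that needs genuine care is the diameter bound in the second paragraph: one must return to the proof of Theorem \ref{graph2} and check that the height parameter $t_0$, the diameter parameter $d_0$, the constant $l$ of Proposition \ref{p13}, and the scaling constant $\lambda$ used to upgrade the boundary product metrics to bounded geometry are all extracted from the \emph{finite} family of model pieces determined by $\mathcal U$ and $k$ alone, so that the piece carrying $x$ has $g_x$-diameter controlled independently of $x$, of the shape of the tree, and of the choice of representatives at the vertices. Everything else is either a direct citation of Theorem \ref{graph} or the routine book-keeping of growth constants under base-point change and composition.
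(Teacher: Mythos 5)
Your proposal follows essentially the same route as the paper: the paper's proof simply declares the vertex carrying $x$ to be the root of $T$ and invokes Theorem \ref{graph2}. Your additional care about finite branches created by re-rooting (handled via Theorem \ref{graph}) and about moving the base point within the root piece via a uniform diameter bound fills in details the paper leaves implicit, but the underlying argument is the same.
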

\begin{proof} By the definition of $M$ any point $x\in M$ belongs to an element of $\mathcal{U}$, which is placed at a vertex $\alpha$ of the tree $T$. Choose $\alpha$ to be the root of $T$. Then the corollary follows from Theorem \ref{graph2}. 
\end{proof}
The Grimaldi-Pansu metric constructed in the proof of Theorem \ref{t3} is heavily dependent on the choice of base point. Theorem \ref{t3} only ensures the existence of a volume growth function in the same class as a given function but does not say anything about the growth constant between them.  While the volume growth functions based at different basepoints all lie in the same growth class, the growth constants might differ largely, and that stops us from having any proper bounds on the volume functions. However, as a consequence of the above corollary, the growth constants of $v$ and the volume growth functions $v_x$ of the family of metrics $g_x$ are uniformly bounded by a constant. Moreover, if $v$ is doubling then we have the following result.
\begin{cor} Let $M$ be a manifold as described in Theorem \ref{graph2}. For any bdg-function $v$ which satisfies a doubling condition as defined in \ref{doublingdfn} there exist positive constants $r_0,A$ and Grimaldi-Pansu metrics $g_x$ on $M$ such that
$$\frac{vol_{g_x}(B(x,2r))}{vol_{g_x}(B(x,r))}\leq A, \quad \forall r>r_0 \ \ {\rm and} \  \ \forall x\in M .$$
\end{cor}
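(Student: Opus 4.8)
The plan is to take for $g_x$ exactly the family of Grimaldi--Pansu metrics furnished by the preceding corollary, and to feed the uniform growth comparison it provides into the doubling inequality for $v$. Throughout, regard $v$ (and the volume growth functions) as non-decreasing functions on $\mathbb R_+$ via $s\mapsto v(\lceil s\rceil)$, and write $v_x(r)=\mathrm{vol}_{g_x}(B(x,r))$. By the preceding corollary there is a constant $C=C(v,\mathcal U,k)\ge 1$, \emph{independent of} $x$, such that
\begin{equation*}
v_x(r)\le C\,v(Cr+C)+C\qquad\text{and}\qquad v(r)\le C\,v_x(Cr+C)+C\qquad\text{for all }r\ge 1
\end{equation*}
(the version for real $r$ follows from the one for integers together with monotonicity, after enlarging $C$ by a bounded factor). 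Rearranging the second inequality gives the lower bound $v_x(r)\ge \tfrac1C\bigl(v(\tfrac{r-C}{C})-C\bigr)$ valid for $r\ge 2C$.

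Next, let $\rho_0\ge 1$ and $D\ge 1$ be the threshold and constant coming from the doubling hypothesis \ref{doublingdfn} on $v$, so that $v(2s)\le D\,v(s)$ for $s\ge\rho_0$, and hence $v(2^Ns)\le D^Nv(s)$ for such $s$ and every $N\in\mathbb N$. Choose $N=N(C)\in\mathbb N$ with $2^N\ge 4C^2$; then an elementary estimate gives $2Cr+C\le 2^N\,\tfrac{r-C}{C}$ for all $r\ge 2C+1$. Consequently, provided in addition $\tfrac{r-C}{C}\ge\rho_0$,
\begin{equation*}
\frac{\mathrm{vol}_{g_x}(B(x,2r))}{\mathrm{vol}_{g_x}(B(x,r))}=\frac{v_x(2r)}{v_x(r)}\le\frac{C\,v(2Cr+C)+C}{\tfrac1C\bigl(v(\tfrac{r-C}{C})-C\bigr)}\le C^2\,\frac{D^N\,v(\tfrac{r-C}{C})+1}{v(\tfrac{r-C}{C})-C}.
\end{equation*}

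To finish, I would use that a bgd-function is increasing with $v(n)\to\infty$ (indeed $v(n+2)-v(n+1)\ge 1/L>0$): there is $\rho_1$ with $v(s)\ge 2C$ for all $s\ge\rho_1$, whence $v(s)-C\ge\tfrac12 v(s)$ and the last displayed fraction is at most $2D^N+1$. Setting $A:=C^2(2D^N+1)$ and $r_0:=\max\{2C+1,\ C\rho_0+C,\ C\rho_1+C\}$ — quantities depending only on $v$, $\mathcal U$, $k$ — yields $\mathrm{vol}_{g_x}(B(x,2r))/\mathrm{vol}_{g_x}(B(x,r))\le A$ for every $r>r_0$ and every $x\in M$, because each estimate used was uniform in $x$. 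The one point needing genuine care, rather than routine computation, is the bookkeeping between the multiplicative distortion $r\mapsto Cr\pm C$ coming from the growth equivalence and the doubling inequality: the crucial observation is that rescaling the radius by the fixed factor $C$ costs only the fixed number $N=N(C)$ of applications of doubling, so neither $x$ nor $r$ enters the final constants. Beyond that I do not anticipate an obstacle — the substantive input, namely uniformity of the growth constant in $x$, is already supplied by Theorem \ref{graph2} and the preceding corollary.
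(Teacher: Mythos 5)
Your proposal is correct and follows essentially the same route as the paper: invoke the preceding corollary to get a growth constant uniform in $x$, then combine the resulting two-sided comparison with the doubling hypothesis on $v$. The paper leaves the final combination as a one-line remark, whereas you carry it out explicitly (iterating the doubling inequality $N$ times to absorb the fixed radius distortion $r\mapsto Cr+C$, and using $v(n)\to\infty$ to absorb the additive constants), which is exactly the intended bookkeeping.
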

\begin{proof} From the above corollary we obtain that the growth constants of the volume growth functions of $g_x$ and $v$ are uniformly bounded. Hence there exist positive constants $A_1, A_2,n_0$ such that
\be A_1 v(A_1n)\leq Vol_{g_x}(B(x,n))\leq A_2v(A_2n), \quad \forall x\in M, \ \forall n\geq n_0.\ee
Since $v$ satisfies the volume doubling condition there exists a constant $K>0$ such that $ \frac{v(2n)}{v(n)} \leq K$ for all $n\in \mathbf{N}$. Combining these two inequalities the result follows.
\end{proof}

\section{Some geometric properties}\label{s4}

A. Grigor'yan and L. Sallof-Coste introduced the Relatively Connected Annulus (R.C.A.) condition on manifolds with one end, which is defined as follows.

\begin{defn}
    A Riemannian manifold $M$ is said to satisfy the R.C.A. condition with respect to a reference point $o\in M$ if there exists a positive constant $0<\theta<1$ such that for any $r>\frac{1}{\theta^2}$ and all $x,y\in M$ with $d(o,x)=d(o,y)=r$, there exists a continuous path from $x$ to $y$ staying in $B(o,\frac{r}{\theta})\setminus B(o,r\theta)$.
\end{defn}

In \cite{CG}, G. Carron considered the equivalent definition along with the added constraint that the length of the geodesic path $\gamma$ joining $x$ and $y$ should satisfy $\mathcal{L}(\gamma)\leq \frac{r}{\theta}$. In the case of manifolds with finitely many ends, he also introduced the Relatively Connected to an End (R.C.E.) condition, which is an extension of the R.C.A. condition. The two conditions are equivalent when the manifold has just a single end.

\begin{defn}
    A complete Riemannian manifold $M$ with a finite number of ends is said to satisfy the R.C.E. condition if there is a constant $0<\theta<1$ and some $r_0$ such that for all $r>r_0$ and for any point $x\in \partial(B(o,r))$, there exists a continuous path $c:[0,1]\to M$ such that $c(0)=x$, $c[0,1]\subset B(o,\frac{r}{\theta})\setminus B(o,r\theta)$, $\mathcal{L}(c)\leq \frac{r}{\theta}$, and there exists a geodesic ray $\gamma$ lying in $M\setminus B(o,\frac{r}{\theta})$ such that $\gamma(0)=c(1)$.
\end{defn}

In what follows, we show that the metrics defined on the manifolds discussed in the previous section can be modified so that they satisfy the R.C.A. or R.C.E. condition depending on their number of ends, while their volume growth remains in a given class $[v]$. The key point here is to select the sequence $\{n_j\}$ in such a manner that we can ensure that the finite branches of the underlying tree with growth in class $[v]$ terminate as we desire. In the case of manifolds with a single end, we have the following theorem. 

\begin{thm}\label{rca1}
     Let $\mathcal{U}$ be a finite collection of closed manifolds of dimension $m$. If $M$ is a one-ended manifold such that $M$ is a connected sum of elements of $\mathcal{U}$ along the rooted tree having one end and no finite branches, then given a bgd-function $v$, there exists a Grimaldi-Pansu metric on $M$ such that the volume growth function on $M$ lies in the same growth class as $v$, and $M$ satisfies the R.C.A. condition.
\end{thm}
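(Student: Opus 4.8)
The hypotheses force $T$ to be the ray $\mathbb{N}$ (one end together with no finite branch means every vertex has exactly one child), so $M$ is an infinite connected sum $M_0\#M_1\#M_2\#\cdots$ of elements of $\mathcal{U}$. In the set-up of Section \ref{s3} each $Q_j$ is then connected (an $M_j$ with one or two balls removed), each $\partial^\pm Q_j$ is a single sphere, and, $\mathcal{U}$ being finite, the metrics and the height/volume parameters $u_j,U_j,\lambda_j$ of Proposition \ref{p13} may be taken independent of $j$. Running the proof of Theorem \ref{t3} (equivalently Theorem \ref{graph2} with $k=2$) yields $M\cong R_{\mathcal{T}_{S,v}}$ with volume growth in $[v]$. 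The point to exploit is that every off-trunk branch of an admissible tree is a finite binary subtree built from $J$-, $K$- and $HS$-pieces, hence diffeomorphic to a ball; thus $R_{\mathcal{T}}\cong M$ for \emph{every} choice of the shapes of these branches, and we may choose them so as to force the R.C.A.\ property, provided we keep the volume growth in $[v]$.

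\textbf{The controlled admissible tree.} Since $M$ is one-ended, so is $\mathcal{T}_{S,v}$: its only end is the trunk, and each off-trunk branch is a finite, $HS$-capped binary subtree sprouting from a ``fat'' trunk vertex (one carrying an $R_j$). After replacing $v$ by a function of the same growth type (Lemma 11 of \cite{GP}) we may assume $v(n)=O(\lambda^n)$ with $\lambda<2$; set $\mu:=\log_2\lambda<1$ and fix $c>\mu/(1-\mu)$, so that $2^{c/(1+c)}>\lambda$. The key step is to carry out the tree construction of Lemma \ref{lw} together with the choice of $\{n_j\}$ of Proposition \ref{lnj}, with the extra requirement that every off-trunk branch sprouting at trunk level $n$ be $HS$-capped at level at most $(1+c)n$. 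This is feasible: the trunk vertices whose branch can reach level $n$ lie at levels in $[\,n/(1+c),\,n\,]$, and a branch sprouted at level $n'$ contributes at most $2^{\,n-n'}$ vertices at level $n$, so the number of off-trunk vertices available at level $n$ is at most $\sum_{\lceil n/(1+c)\rceil\le n'\le n}2^{\,n-n'}<2\cdot 2^{\,nc/(1+c)}$, which for large $n$ dominates $v(n)-v(n-1)$ because $2^{c/(1+c)}>\lambda$; since one may always branch less, the off-trunk vertex count at each level can be tuned to the value $v(n)-v(n-1)-1$ prescribed by $v$, which via Lemma 16 of \cite{GP} keeps the discrete growth $z$, hence the volume growth, in $[v]$. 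The sparse blocks $[n_j,n_j+t_j-1]$ carrying the $Q_j$'s contribute no branching, and, $\mathcal{U}$ being finite, the $d_j$ can be inflated so that $n_j\le d_j$ and $S=\bigcup_j[n_j,n_j+t_j-1]$ has vanishing lower density.

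\textbf{Verifying R.C.A.} Fix $o$ on the root piece and recall from the proof of Theorem \ref{t3} that $\tfrac13 r(x)-O(1)\le d(o,x)\le 3\,r(x)$, that a point of a piece $P\ne Q_j$ at trunk level $n$ has $r(x)\in[\,nl,\,nl+l\,]$, and that a point of $Q_j$ has $r(x)\in[\,n_jl,\,n_jl+lt_j\,]$ with $lt_j$ a bounded fraction of $n_jl$. Let $r>1/\theta^2$ be large and $x,y\in M$ with $d(o,x)=d(o,y)=r$. If $x$ lies in the trunk region it is its own exit point $p_x$; otherwise $x$ lies in a branch sprouting at trunk level $n_0$, its level is at most $(1+c)n_0$, hence $n_0\ge (1+c)^{-1}r(x)/l-O(1)$, and a minimizing geodesic from $x$ to the sprouting trunk vertex $p_x$ stays in $\{\,n_0l\le r(\cdot)\le r(x)+l\,\}$. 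Define $p_y$ analogously. Concatenate the path from $x$ to $p_x$, a path along the connected trunk region from $p_x$ to $p_y$, and the path from $p_y$ to $y$. Along the whole path $r(\cdot)$ is bounded above by a fixed multiple of $\max(r(x),r(y))\le 3r$ and below by a fixed positive multiple of $r$ (namely $\min(n_0,n_0')l$ when $x,y$ both lie in branches, and similarly in the remaining cases). Hence $d(o,\cdot)$ stays between a fixed positive multiple of $r$ and a fixed multiple of $r$, and for a suitable $\theta\in(0,1)$ the path lies in $B(o,r/\theta)\setminus B(o,r\theta)$. This establishes the R.C.A.\ condition.

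\textbf{The main obstacle.} The only real difficulty is the tree construction of the second step: one must interleave the sparse thin blocks $[n_j,n_j+t_j-1]$ carrying the uniformly bounded pieces $Q_j$ with the fat stretches of the trunk along which the finite branches are sprouted and grown, so that simultaneously $\mathcal{T}_{S,v}$ has growth exactly $v$ at the root, $S$ has vanishing lower density with $n_j\le d_j$, and no branch is ever forced deeper than $c$ times its sprouting level. This last condition together with $v(n)=O(\lambda^n)$ for some $\lambda<2$ is precisely what keeps $\theta$ bounded away from $0$, and is why no hypothesis on $v$ beyond being a bgd-function is needed here, unlike in Theorem \ref{rca2}.
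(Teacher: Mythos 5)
Your overall strategy is the same counting argument the paper uses: both hinge on $v(n)=O(\lambda^n)$ with $\lambda<2$, the fact that an off-trunk branch can at most double at each level, and the resulting bound $\sum 2^{i}$ on how many level-$s$ vertices a deep branch forces. The difference is one of organization: the paper runs the standard Grimaldi--Pansu construction (branches filled starting from the newest) and then proves \emph{by contradiction} that a branch sprouting at level $n_0$ cannot survive past level roughly $n_0/(3\theta)$, whereas you try to \emph{build} the admissible tree subject to the a priori constraint that every branch sprouting at level $n'$ is capped by level $(1+c)n'$. Your R.C.A. verification from that capping property is fine.

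The gap is in the feasibility of your constrained construction. You check only a total-capacity inequality: the branches alive at level $n$ could in principle supply $\sum_{n'\ge n/(1+c)}2^{\,n-n'}\approx 2^{\,nc/(1+c)}\gg v(n)-v(n-1)$ vertices, and you conclude ``since one may always branch less'' the counts can be tuned to $v$. But the tree must realize growth exactly $v$ \emph{level by level}: the vertices at level $n+1$ are children of vertices at level $n$, so when your constraint forces the branch sprouted at level $\lceil n/(1+c)\rceil$ to be capped at level $n$, the remaining branches can contribute at most twice their level-$n$ population plus one newly sprouted vertex. Nothing in your argument prevents the capped branch from carrying a large share of the level-$n$ vertices (its capacity $2^{\,nc/(1+c)}$ alone exceeds $\lambda^n$), in which case the survivors cannot reach $v(n+1)-v(n)$. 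Closing this requires specifying a filling rule --- e.g. GP's newest-branch-first rule, under which old branches are thin by the time they are capped --- which is precisely what the paper's contradiction argument exploits without having to say so. Separately, you do not treat the case where $v(n)-v(n-1)$ is bounded, where Proposition \ref{lnj} forces $n_j$ to grow only linearly and several $Q_j$-blocks (which suppress sprouting for $t_0$ consecutive levels) can sit inside one annulus; the paper handles this with an explicit count of the number $\mu$ of such blocks, and your uniform $t_0$ makes this workable but it is not automatic from what you wrote.
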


\begin{proof}
    By Lemma $11$ of \cite{GP}, we can assume without loss of any generality that $v(n)=o(\lambda^n)$ for some $1<\lambda<2$.
    
    Following the previous theorems, we can choose an exhaustion on $M$ such that the pieces of the form $Q_j$ are elements of $\mathcal{U}$, and we can put metrics on each such piece so that they satisfy $\frac{1}{3}lt_0\leq t_{Q_j}\leq T_{Q_j}\leq lt_0$ for some $t_0$, as per the requirements in the Proposition \ref{p13}. Fix a basepoint $o$ on the piece $M_1$. We claim that the set $S=\{n_j\}$ of vertices where we attach the pieces $Q_j$ can be chosen in such a manner that the resultant manifold satisfies the R.C.A. condition. Let $\mathcal{T}$ be the admissible tree with growth $v$ where we attach the pieces.

    Consider a point $x$ lying on $\partial B(o,r)$. Then $x$ lies on a piece of level $n$ where $\frac{r}{l}\leq n\leq \frac{3r}{l}$, by Proposition \ref{p13}. For any $0<\theta<1$, any point lying at a distance $r\theta$ from $o$ lies on a point at some level $n_\theta$ where $\frac{r\theta}{l}\leq n_\theta\leq \frac{3r\theta}{l}$. Therefore, the annulus $B(o,r)\setminus B(o,r\theta)$ has a minimum of $\lfloor \frac{r(1-\theta)}{l}\rfloor$ levels in the tree $\mathcal{T}$. Then, it suffices to show that any vertex on level $s=\lfloor \frac{r}{l} \rfloor$ is connected to the trunk within the annulus, so it can be connected to the trunk after level $\lceil \frac{3r\theta}{l}\rceil$. Thus, it suffices to show that a vertex on level $s$ departs the trunk only after level $3\theta s +1$. However, in the case the vertex on the trunk at level $\lfloor 3\theta s +1\rfloor$ is a $Q_j$, we need to show that a vertex on level $s$ departs the trunk only after level $3\theta s +1+t_0$. Note that for $r$ large enough and $\theta$ small enough, if a piece intersects $\partial (B(o,r))$ then the entire piece lies within the annulus $B(o,\frac{r}{\theta})\setminus B(o,r\theta)$.

    First, let us assume that $\lim_{n\to \infty}v(n)-v(n-1)\to \infty$. Then $\{n_j\}$ can be chosen such that $n_j>j!K$ for a large enough constant $K$, by Proposition  \ref{lnj}. This ensures that for any $k>1$ and any annulus containing the vertices in levels $[n,kn]$, we can choose $n_j$ such that there is at most one $Q_j$ present in the annulus for all $n$.  

    There are $s-(3\theta s+1+t_0)$ levels of vertices in the annulus, and we choose the sequence $\{n_j\}$ such that the interval $[3\theta s +1+t_0,m]$ contains at most one value of $n_j$. Then, since each piece $Q_j$ is attached to $t_0$ vertices, there are at least $s-(3\theta s+1)-2t_0$ many levels of vertices in the annulus which do not have a $Q_j$ attached. Each of these vertices can have $2$ branches attached at each level. Since at any level $k$, we attach $v(k)-v(k-1)$ branches starting from the newest branch of the trunk, the only way that there exists a vertex at level $s$ that departs before level $3\theta s +1+t_0$ is if each of the vertices in the $s-(3\theta s+1)-2t_0$ levels in the annulus have $2$ branches at each subsequent level. But then, there are at least $\Sigma_{i=1}^{s-(3\theta s+1)-2t_0} 2^i$ vertices at level $s$, since a vertex of the trunk at level $3\theta s+1+t_0+i$ which is not represented by $Q_j$ branches to $2^{s-((3\theta s+1+t_0)+i)}$ vertices by level $s$. Therefore,
    $$v(s)> \Sigma_{i=1}^{s-(3\theta s+1)-2t_0} 2^i>2(2^{s-(3\theta s+1)-2t_0})=2^{-2t_0}(2^{(1-3\theta)})^s.$$
    We know that $v(n)=o(\lambda^n)$ for some $\lambda<2$. Thus, there exists $N_1>0$ such that $v(n)\leq \alpha \lambda^n$ for some constant $\alpha$. 
    
    Choose $\theta< \min\{{\frac{1}{3}(1-\frac{\ln{\lambda}}{\ln{2}}), \frac{-3\ln{2}+\sqrt{9(\ln{2})^2-4\ln{\frac{\lambda}{2}}\ln{\alpha 2^{2t_0}}}}{2\ln{\alpha 2^{2t_0}}} }\}$. This satisfies the constraint that $0<\theta<\frac{1}{3}$ since $\lambda<2$. Then we have $$2^{(1-3\theta)}>\lambda.$$
    Then, for all $n>\max\{N_1, \frac{\ln{\alpha}+ \ln{2^{2t_0}}}{(1-3\theta)\ln{2}-\ln{\lambda}}\}$, $${(\alpha 2^{t_0})}^{\frac{1}{n}}\lambda<2^{(1-3\theta)}.$$
    This implies that $$2^{-2t_0}(2^{(1-3\theta)})^s>\alpha \lambda^n.$$
    If the manifold does not satisfy R.C.A. at level $s$ then we have shown that $v(s)> 2^{-2t_0}(2^{(1-3\theta)})^s.$ Thus, using the fact that $v(n)\leq \alpha \lambda^n$, we arrive at a contradiction.

    By our choice of $\theta$, whenever $r>\frac{1}{\theta^2}$, that is $n>\frac{1}{\theta^2l}$, the vertices at level $n$ depart from the trunk at some level after $3\theta n + 1+t_0$. Thus we can ensure that any point at distance $r$ from $o$ reaches the trunk at some point at a distance greater than $r\theta$ from $o$. This implies that any $2$ points lying on $\partial B(o,r)$ can be connected via a path lying in the annulus $B(o,\frac{r}{\theta})\setminus B(o,r\theta)$ for all $r>\frac{1}{\theta^2}$, and hence the manifold satisfies the R.C.A. condition. 

    Now, consider the case where $v(n)-v(n-1)$ is bounded above by some constant $A$. Then $v(n)\leq An+v(1)$. In this case $\{n_j\}$ can be chosen to satisfy $n_j=C_1+jC_2$ for some constants $C_1$ and $C_2$, also by Proposition \ref{lnj}. We again try to show the existence of some $\theta$ such that any piece on level $s$ departs from the trunk after level $s-(3\theta s + 1+t_0)$. Let there be $\mu$ number of pieces of the type $Q_j$ in the annulus. Then there are at least $\mu-1$ intervals of $C_2$ vertices each where the pieces attached to the trunk are not of the type $Q_j$. Therefore, $$C_2(\mu-1)+t_0\mu=s-(3s\theta +1 +t_0).$$
    This implies $$\mu=\frac{s(1-3\theta)+1}{t_0+C_2}+1.$$
    From each of the $(\mu-1)C_2$ vertices, there are $2$ vertices from the trunk. Like in the previous case, each of those vertices can have $2$ branches at each subsequent level, and since these need to be exhausted at level $s$ before a branch from beyond the annulus reaches level $s$, we need to have $$v(s)> \Sigma_{i=1}^{C_2(\mu-1)}2^i=\Sigma_{i=1}^{\frac{C_2(s(1-3\theta)+1)}{t_0+C_2}} 2^i>2(2^{\frac{C_2(s(1-3\theta)+1)}{t_0+C_2}})=2^{(\frac{C_2}{t_0+C_2}+1)}(2^{\frac{C_2(1-3\theta)}{t_0+C_2}})^s.$$
    Since we have some $\lambda<2$ such that $v(n)=o(\lambda^n)$, which implies that $v(n)\leq \alpha \lambda^n$ for some constant $\alpha$, we can again choose $C_2$ large enough and $\theta$ small enough such that there exists some $N$ so that $2^{(\frac{C_2}{t_0+C_2}+1)}(2^{\frac{C_2(1-3\theta)}{t_0+C_2}})^s>\alpha\lambda^s$ for all $s>N$, and $N<\frac{1}{l\theta^2}$. Thus we again arrive at a contradiction.
    
    Hence for any piece lying at a level $s>N$, there is a geodesic path joining it to the trunk lying within the pieces between levels $3s\theta +1+t_0$ and $s$. Therefore, for all $r$ large enough ($r>Nl$), any point on $\partial (B(o,r))$ can be connected to a point on the trunk via a path lying in the annulus $B(o,\frac{r}{\theta})\setminus B(o,r\theta)$.
    
    Thus, the manifold satisfies the R.C.A. condition as claimed.
\end{proof}

\begin{thm}\label{rca2}
    Given a one-ended manifold $M$ and a bgd-function $v$ such that  $\lim_{n\to \infty}v(n)-v(n-1)\to \infty$, there exists a Grimaldi-Pansu metric on $M$ such the volume growth function lies in the growth class of $v$ and $M$ satisfies the R.C.A. condition.
\end{thm}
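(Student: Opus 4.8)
The plan is to follow the proof of Theorem~\ref{rca1}, replacing the connected-sum structure by a general exhaustion of the one-ended manifold $M$ and using the hypothesis $\lim_{n\to\infty}\big(v(n)-v(n-1)\big)=+\infty$ to place the attaching sequence $\{n_j\}$ as sparsely as needed. By Lemma~11 of \cite{GP} we may assume $v$ satisfies the normalizations of Lemma~\ref{lw}; in particular $v(n)=o(\lambda^n)$ for some $1<\lambda<2$, so $v(n)\le\alpha\lambda^n$ for some $\alpha$ and all large $n$. Since $M$ has one end we may choose an exhaustion $\{\mathcal{A}_j\}$ with each $Q_j=\mathcal{A}_{j+1}\setminus\mathcal{A}_j$ a connected compact manifold with boundary and, after discarding finitely many terms, with $\partial\mathcal{A}_j$ connected; it is convenient to take the exhaustion so that the topological complexity of $Q_j$, and hence its height parameter $t_j$, grows slowly. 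Applying Proposition~\ref{p13} produces metrics of bounded geometry on the pieces $Q_j,R_j,K,HS,J$, the integers $l,h,H$, and the sequences $t_j,u_j,U_j,d_j$; here $d_j$ is only an upper bound on a diameter and may be prescribed arbitrarily large. Fix the basepoint $o$ on the piece glued to the root of the admissible tree $\mathcal{T}=\mathcal{T}_{S,v}$.

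The heart of the matter is the choice of $S=\bigcup_j[n_j,n_j+t_j-1]$. Since $\lim_{n\to\infty}\big(v(n)-v(n-1)\big)=+\infty$, Proposition~\ref{lnj} applies; because the $d_j$ may be taken as large as we please (the only quantitative constraint being $n_j\le d_j$), and because making $\{n_j\}$ sparser only helps its other two conclusions, we may take the increasing sequence $\{n_j\}$ it furnishes so sparse that $n_j/n_{j-1}\to\infty$ and $t_j=o(n_j)$, while still $S$ has vanishing lower density and the discrete growth function $z$ of $R_{\mathcal{T}_{S,v}}$ has the same growth type as $v$. We record the two consequences that will be used: for every fixed $k>1$ and all large $n$, the level range $[n,kn]$ meets at most one interval $[n_j,n_j+t_j-1]$ --- so at most one copy of a piece of type $Q_j$ lies in the corresponding scale-doubling annulus --- and that copy occupies only $o(n)$ trunk levels. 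By the construction behind Theorem~\ref{t3}, $R_{\mathcal{T}_{S,v}}$ is diffeomorphic to $M$ and its volume growth function lies in the class of $z$, hence of $v$; it remains only to verify the R.C.A. condition for this Grimaldi-Pansu metric.

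For R.C.A., fix $\theta$ with $0<\theta<\tfrac19$ and $2^{\,1-3\theta}>\lambda$ (possible since $\lambda<2$), and take $r$ large. By the distance bounds of Proposition~\ref{p13} used in the proof of Theorem~\ref{t3}, a point at level $m$ is at distance between a fixed multiple of $ml$ and $3(m+1)l$ from $o$; in particular $x\in\partial B(o,r)$ lies on a piece at a level $n$ with $r/l\le n\le 3r/l$, any point at a level $\ge 3\theta r/l$ lies outside $B(o,r\theta)$, and (for $\theta<\tfrac19$ and $r$ large) every point at a level $\le 3r/l$ lies inside $B(o,r/\theta)$. Hence it suffices to show that the branch of $\mathcal{T}$ carrying the level-$n$ vertex departs from the trunk only after level $3\theta n+1+t_{j(n)}$ --- which is at least $3\theta r/l$, since $n\ge r/l$ --- where $t_{j(n)}=o(n)$ bounds the height of the unique copy of a $Q_j$ that may sit on the trunk near that level. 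Suppose it departed earlier; then, arguing as in Theorem~\ref{rca1} (only the case $\lim\big(v(n)-v(n-1)\big)=+\infty$ being relevant here), every trunk vertex between the two levels that is not part of a $Q_j$ would be forced to branch binarily at each subsequent level in order that some descendant reach level $n$, forcing $v(n)$ to exceed the multiple $2^{-O(t_{j(n)})}$ of $\big(2^{\,1-3\theta}\big)^{n}$. Since $t_{j(n)}=o(n)$ and $2^{\,1-3\theta}>\lambda$, this contradicts $v(n)\le\alpha\lambda^n$ for $n$ large. Choosing $\theta$ small enough that $1/\theta^2$ exceeds the resulting threshold on $n$ (which, as $t_{j(n)}=o(n)$, may be taken independent of $\theta$ for $\theta$ small), we conclude that for every $r>1/\theta^2$ any $x\in\partial B(o,r)$ joins the trunk by a path staying at levels between (roughly) $3\theta n$ and $n$, hence inside $B(o,r/\theta)\setminus B(o,r\theta)$; joining two such points through the trunk within that annulus gives the R.C.A. condition. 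The same branch-depth bound applies verbatim to the finite branches of $\mathcal{T}_{S,v}$ (those capped by half-spheres), so they cause no difficulty.

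I expect the main obstacle to be reconciling the two demands on $\{n_j\}$: the growth conclusion $z\sim v$ of Proposition~\ref{lnj} on one hand, and, on the other, enough sparseness that each scale-doubling annulus meets only one $Q_j$, of height $o(n)$. The resolution is that Proposition~\ref{lnj} is proved --- under the standing hypothesis $\lim\big(v(n+1)-v(n)\big)=+\infty$ --- precisely by making $\{n_j\}$ sparse, and that its one quantitative constraint $n_j\le d_j$ can always be met by inflating the diameter parameters $d_j$; thus both demands hold simultaneously, and what remains is the purely combinatorial branch-depth estimate on $\mathcal{T}_{S,v}$, which is that of Theorem~\ref{rca1} made cosmetically more general only because the heights $t_j$ may now grow --- but they are dominated by $n_j$. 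Finally, in contrast with Theorem~\ref{rca1}, no assumption is made on the topological type of $M$; this is harmless, because when $M$ has infinite topological type the standing assumption $\lim\big(v(n)-v(n-1)\big)=+\infty$ is precisely the condition $\lim v(n)/n=+\infty$ that Theorem~\ref{t3}(2) requires for a Grimaldi-Pansu metric of growth $[v]$ to exist.
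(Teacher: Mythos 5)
Your proposal is correct and follows essentially the same route as the paper: a connected exhaustion for the single end, Proposition~\ref{lnj} used under the hypothesis $\lim\big(v(n)-v(n-1)\big)=+\infty$ to push each $n_j$ beyond a threshold that is linear in $t_j$ (so each annulus meets at most one $Q_j$ of relatively small height), and the binary branch-counting contradiction $v(s)>2^{-O(t_j)}\big(2^{1-3\theta}\big)^s$ against $v(s)\le\alpha\lambda^s$ with $2^{1-3\theta}>\lambda$. The paper's proof is exactly this argument with the explicit choice $n_j>\frac{\ln\alpha+\ln 2^{t_j}}{(1-3\theta)\ln 2-\ln\lambda}$.
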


\begin{proof}
    Since $M$ has a single end, we can take an exhaustion $\{U_i\}$ of $M$ such that each $Q_j=U_{j+1}\setminus U_j$ is connected. Next, we put metrics of bounded geometry on each $Q_j$ in a manner that satisfies the constraints of Proposition \ref{p13}. So we get a sequence $\{t_j\}$ such that the piece $Q_j$ is attached to $t_j$ number of consecutive vertices. Also, the diameter of each $Q_j$ is bounded above by $lt_j$.

    By Lemma $11$ of \cite{GP}, we again assume without loss of any generality that $v(n)=o(\lambda^n)$ for some $1<\lambda<2$. Thus there exists some integer $N$ such that for all $n>N$, $v(n)\leq \alpha\lambda^n$ for a constant $\alpha$. Fix some $\theta< \frac{1}{3}(1-\frac{\ln{\lambda}}{\ln{2}})$.

    Like in the previous proof, it suffices to show that there exists some $N_1$ such that for any $s>N_1$, any piece lying on a level $s$ departs from the trunk after level $3\theta s+1$. We claim that we can choose the sequence $\{n_j\}$ of the levels where the pieces $Q_j$ are attached in such a way that this holds.

    If there is a piece $Q_j$ attached on some vertices within the annulus between levels $3\theta s+1$ and $s$, then there are at most $s-(3\theta s +1+t_j)$ levels of the trunk which are not of type $Q_j$ in this annulus. Following the method used in the previous proof, if we consider $s>\frac{\ln{\alpha}+\ln{2^{t_j}}}{(1-3\theta)\ln{2}-\ln{\lambda}}$, then the piece on level $s$ departs from the trunk within the annulus. Thus, we can choose $n_j>\frac{\ln{\alpha}+\ln{2^{t_j}}}{(1-3\theta)\ln{2}-\ln{\lambda}}$. Note that if there is a piece $Q_j$ attached at level $s$, then the choice of $n_j$ ensures that $s-(3\theta s+1)>t_j$. 

    In the case where there is no piece of the type $Q_j$ attached to any vertex in the annulus, we can follow the same proof to see that for all $s>\frac{\ln{\alpha}}{(1-3\theta)\ln{2}-\ln{\lambda}}$, the vertices on level $s$ can be joined to the trunk within the annulus.

    Thus, choosing $\theta< \min\{\frac{1}{3}(1-\frac{\ln{\lambda}}{\ln{2}}),\frac{-3\ln{2}+\sqrt{9(\ln{2})^2-4\ln{\frac{\lambda}{2}}\ln{\alpha}}}{2\ln{\alpha }} \}$, and $n_j>\frac{\ln{\alpha}+\ln{2^{t_j}}}{(1-3\theta)\ln{2}-\ln{\lambda}}$, we get that the manifold satisfies the R.C.A. condition for all levels $s>\frac{\ln{\alpha}}{(1-3\theta)\ln{2}-\ln{\lambda}}$, that is for all $r>\frac{1}{\theta^2}$.
\end{proof}

\begin{rem}
    In Theorem \ref{rca1}, the components of the pieces $Q_j$ belonged to finitely many diffeomorphism classes, and hence we were able to obtain bounds on the diameter of the $Q_j$. This ensured that we were able to choose the sequence $\{n_j\}$ in such a manner as to ensure the R.C.A. condition even when $v(n)-v(n-1)$ was bounded. However, in the general case, since we cannot obtain diameter bounds for the $Q_j$, we require the added constraint. When $v(n)-v(n-1)$ is bounded, the sequence $\{n_j\}$ needs to be linear by Proposition \ref{lnj}. Hence we cannot choose $n_j$ to be arbitrarily large, and the construction above does not work.
\end{rem}

Note that the geodesic path joining any two points of $\partial B(o,r)$ constructed in the above theorems also satisfies Gilles Carron's additional constraint on length, since the maximum length of any such path is $2(2l + r(1-\theta))<\frac{r}{\theta}$.
When the manifold has multiple ends, the same idea can be extended to prove the R.C.E. condition. 

\begin{cor}\label{rce1}
    Given a finite collection $\mathcal{U}$ of closed manifolds of the same dimension, any infinite rooted tree $T$ with finitely many ends and no finite branches, and any bgd-function $v$, for every manifold $M$ which is a connected sum of elements of $\mathcal{U}$ along $T$ there exists a Grimaldi-Pansu metric on $M$ such that the volume growth function of the manifold lies in the same growth class as $v$, and $M$ satisfies the R.C.E. condition.
\end{cor}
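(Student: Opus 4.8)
The plan is to reduce the multi-ended case to the one-ended case handled in Theorem \ref{rca1} by exploiting the structure of a tree $T$ with finitely many ends. Since $T$ has finitely many ends, there exists a finite subtree $T_0$ (a ``core'') such that $T \setminus T_0$ consists of finitely many disjoint rooted subtrees $T_1, \dots, T_p$, each of which has exactly one end and no finite branches. Correspondingly, $M$ decomposes as a compact central region (the connected sum of the pieces over $T_0$, together with the bounded collar regions near where the $T_i$ branch off) with $p$ non-compact ``arms'' $M_1, \dots, M_p$ attached, where each $M_i$ is a connected sum of elements of $\mathcal{U}$ along the one-ended tree $T_i$.

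First I would apply the construction of Theorem \ref{rca1} to each arm $M_i$ separately: on each $M_i$, viewed with its own root at the vertex where $T_i$ meets $T_0$, we obtain a Grimaldi-Pansu metric whose volume growth lies in $[v]$ and which satisfies the R.C.A. condition with some parameter $\theta_i$. The key observation is that the selection of the sequence $\{n_j^{(i)}\}$ for each arm can be made using the \emph{same} bounds, because the components of the pieces $Q_j$ along each arm are drawn from the finite collection $\mathcal{U}$ (with finitely many discs removed), so the parameters $t_j, u_j, U_j, d_j$ from Proposition \ref{p13} can be taken uniform across all arms; hence we may choose a common $\theta = \min_i \theta_i$ and coherent sequences. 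On the compact central region we just fix any metric of bounded geometry agreeing with the chosen product metrics near the finitely many gluing boundaries. Piecing these together gives a global metric $g$ of bounded geometry on $M$ with volume growth in $[v]$ (the central region being compact contributes nothing to the growth type, and the growth of a finite union of arms each in class $[v]$ is again in class $[v]$ since $v$ is a bgd-function).

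Next I would verify the R.C.E. condition directly from this structure. Fix the base point $o$ in the central region. For $r$ larger than the diameter of the central region plus a fixed constant, the sphere $\partial B(o,r)$ is contained in the union of the arms, and any point $x \in \partial B(o,r)$ lies in exactly one arm $M_i$ at a level $s$ with $\frac{r}{l} \le s \le \frac{3r}{l}$. By the R.C.A. argument inside $M_i$ (which is exactly the content of the proof of Theorem \ref{rca1}: a vertex at level $s$ is connected to the trunk of $T_i$ within the annulus $B(o, r/\theta) \setminus B(o, r\theta)$), we can join $x$ by a continuous path of length at most $r/\theta$, staying in that annulus, to a point $c(1)$ lying on the trunk of $T_i$ at a level beyond $r/(\theta l)$; from $c(1)$ the trunk of $T_i$ itself provides a geodesic ray lying in $M \setminus B(o, r/\theta)$, which is exactly the $\gamma$ required by the R.C.E. definition. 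Here the fact that $T$ has no finite branches is what guarantees the existence of the infinite ray along the trunk of each $T_i$.

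The main obstacle I anticipate is making the distance estimates near the central region uniform: one must check that for all sufficiently large $r$, no sphere $\partial B(o,r)$ meets the compact core, so that the R.C.A. argument inside a single arm can be applied cleanly without interference between arms, and that the level-to-distance comparison $\frac{r}{l} \le s \le \frac{3r}{l}$ from Proposition \ref{p13} still holds when $o$ is placed in the core rather than at the root of the arm — this requires only a bounded shift, absorbed by enlarging $r_0$ and slightly shrinking $\theta$. A secondary technical point is ensuring that when the relevant trunk vertex at the boundary of the annulus happens to carry a piece of type $Q_j$ (which occupies $t_j$ consecutive levels), the departure level is pushed past $3\theta s + 1 + t_0$ rather than just $3\theta s + 1$; this is handled exactly as in Theorem \ref{rca1} by the uniform bound on $t_0$, and then the choice of $\{n_j^{(i)}\}$ from Proposition \ref{lnj} with $n_j \ge j! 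K$ (or linear in the bounded-increment case) finishes the argument.
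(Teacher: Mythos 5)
Your argument is essentially correct, but it takes a genuinely different route from the paper. The paper does not decompose $M$ into a compact core plus one-ended arms at all: it keeps the single Grimaldi--Pansu scaffolding, namely one admissible tree $\mathcal{T}$ with one trunk, and attaches \emph{all} the (possibly disconnected) pieces $Q_j$ --- which carry all the ends of $M$ --- to that one trunk at the levels $n_j$. The observation is then two-fold: (i) every piece sitting on the trunk admits a geodesic ray to infinity through the successive strands $Q_j, R_j, Q_{j+1},\dots$ (here ``no finite branches'' of $T$ is used), so the ray $\gamma$ required by R.C.E.\ comes for free; and (ii) the annulus argument in the proof of Theorem \ref{rca1} already shows that any point of $\partial B(o,r)$, including points deep inside the $J/K/HS$ branches, reaches the trunk within $B(o,r/\theta)\setminus B(o,r\theta)$. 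So the paper's proof is a short remark on top of Theorem \ref{rca1}, run once for the whole manifold. Your version instead splits $T$ into a finite core and $p$ one-ended rays, runs Theorem \ref{rca1} independently on each arm, and glues; this forces you to handle the basepoint shift from the arm roots $o_i$ to the global $o$, the uniformity of the parameters $t_j,d_j,U_j$ and of $\theta$ across arms, and the fact that a sum of $p$ growth functions in $[v]$ stays in $[v]$ --- all of which you correctly identify and which do go through, the uniformity precisely because the pieces are drawn from the finite collection $\mathcal{U}$. What your approach buys is modularity (each end is treated as a self-contained one-ended manifold, and the arms could in principle carry different growth prescriptions); what it costs is that the resulting metric is a gluing of $p$ Grimaldi--Pansu metrics rather than literally the output of one admissible-tree construction as the corollary's phrasing suggests, and you should say a word about why the arms (which are connected sums along $T_i$ with one disc removed) fall under the construction --- this is covered by the paper's remark on manifolds with compact boundary components. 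Neither point is a genuine gap.
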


\begin{proof}
    We can again consider a suitable exhaustion of $M$ such that each $Q_j$ is a disjoint union of elements of $\mathcal{U}$ with discs removed, and put suitable metrics on all pieces as in the previous proofs. On choosing the sequence $\{n_j\}$ of vertices of the trunk where we attach a piece $Q_j$, we get a tree $\mathcal{T}$ with growth $v$. The pieces are attached according to the vertices of the tree $\mathcal{T}$ as before. Note that all the pieces attached to the vertices of the tree $T$ are now attached to the trunk of the tree $\mathcal{T}$.
    On any piece $P$ attached to the trunk of $\mathcal{T}$ such that $\partial(B(o,r))$ intersects $P$, there exists a point $x\in P\cap \partial(B(o,r))$ and a geodesic ray $\gamma:[1,\infty)\to M\setminus B(o,r)$ such that $\gamma(0)=x$. Thus, it again suffices to show that there exists a $\theta<1$ such that for any point $y$ in $M$ with $y\in \partial(B(o,r))$, $y$ can be joined to a piece on the trunk via a geodesic path lying entirely in the annulus $B(o,\frac{r}{\theta})\setminus B(o,r\theta)$. Therefore the proof of the previous theorem also works in this case and suffices to prove that the manifold satisfies the R.C.E. condition.
\end{proof}

\begin{cor}\label{rce2}
    Given a manifold $M$ with finitely many ends and a bgd-function $v$ which satisfies  $\lim_{n\to \infty}v(n)-v(n-1)\to \infty$, there exists a Grimaldi-Pansu metric on $M$ such the volume growth function lies in the growth class of $v$ and $M$ satisfies the R.C.E. condition.
\end{cor}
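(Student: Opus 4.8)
The plan is to run the proof of Theorem~\ref{rca2} in the multi-ended setting, in the same way that Corollary~\ref{rce1} runs the proof of Theorem~\ref{rca1}; the only new structural feature is that the blocks $Q_j$ may now be disconnected, which is already permitted by Proposition~\ref{p13} and handled throughout Section~\ref{s2}. Since $M$ has finitely many ends, I would first fix an exhaustion $\{\mathcal{A}_j\}$ of $M$ by compact submanifolds such that each $Q_j=\mathcal{A}_{j+1}\setminus\mathcal{A}_j$ is a (possibly disconnected) compact manifold with boundary every component of which is a collar on one of the ends of $M$, with $\partial^+Q_j$ diffeomorphic to $\partial^-Q_{j+1}$. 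I would realize $M$ as $R_{\mathcal{T}}$ for an admissible tree $\mathcal{T}$ of growth $v$ with the blocks $Q_j$ attached along the trunk and the side vertices filled with $R_j,K,HS,J$ as in Section~\ref{s2}, use Proposition~\ref{p13} to put on every piece a bounded-geometry metric that is a product near each boundary component, with parameters $t_j,u_j,d_j$, and fix a basepoint $o$ on the piece over the root.

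Next I would exploit the hypothesis $\lim_{n\to\infty}\bigl(v(n)-v(n-1)\bigr)=+\infty$: by Proposition~\ref{lnj} the sequence $\{n_j\}$ can be taken to grow as fast as we please, so that simultaneously the discrete growth function $z$ of $R_{\mathcal{T}_{S,v}}$ has the same growth type as $v$ — so, by the proof of Theorem~\ref{t3}, the volume growth of the resulting Grimaldi--Pansu metric lies in $[v]$ — and every band of trunk levels $[\,\lfloor 3\theta s\rfloor+1,\,s\,]$ meets at most one block $Q_j$. By Lemma~$11$ of \cite{GP} we may also assume $v(n)=o(\lambda^n)$ for some $1<\lambda<2$, hence $v(n)\le\alpha\lambda^n$ for large $n$; I would fix $\theta<\tfrac{1}{3}\bigl(1-\tfrac{\ln\lambda}{\ln 2}\bigr)$ and then shrink it further, as in the proof of Theorem~\ref{rca2}, to absorb the lower-order contribution of the single $Q_j$ possibly present in the band.

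Then comes the verification of R.C.E. A point $x$ with $d(o,x)=r$ lies on a piece at level $n$ with $r/l\le n\le 3r/l$. Because after the exhaustion all the ends of $M$ run along the trunk of $\mathcal{T}$ through the disconnected blocks $Q_j$, exactly as in Corollary~\ref{rce1} every piece on the trunk meeting $\partial B(o,\rho)$ contains a point from which a geodesic ray escapes into $M\setminus B(o,\rho)$; taking $\rho=r/\theta$, it therefore suffices to find a path of length at most $\tfrac{r}{\theta}$ from $x$, staying inside $B(o,\tfrac{r}{\theta})\setminus B(o,r\theta)$, that first reconnects $x$ to the trunk and then slides outward along the trunk to the outer sphere. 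The reconnection is the branch-departure estimate from the proof of Theorem~\ref{rca2}: if the branch carrying $x$ did not leave the trunk after level $3\theta s+1+t_j$, then binary branching would force — up to a bounded factor — more than $2^{(1-3\theta)s}$ vertices on level $s$, contradicting $v(s)\le\alpha\lambda^s$ once $2^{(1-3\theta)}>\lambda$ and $s$ is large, the extra $t_j$ being harmless precisely because at most one block occupies the band. The bound $\mathcal{L}(c)\le 2\bigl(2l+r(1-\theta)\bigr)<\tfrac{r}{\theta}$ and G.\ Carron's additional length requirement are the estimates already recorded just before Corollary~\ref{rce1}.

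The main obstacle is exactly the point flagged in the remark after Theorem~\ref{rca2}: for a general $M$ the components of $Q_j$ need not fall into finitely many diffeomorphism classes, so the diameter parameters $d_j$ are uncontrolled and one block $Q_j$ can by itself occupy many trunk levels. This forces the $n_j$ to be so sparse that at most one such block lands in the critical band $[\,\lfloor 3\theta s\rfloor+1,\,s\,]$, and that sparseness is available only because $\lim_n\bigl(v(n)-v(n-1)\bigr)=+\infty$ lets Proposition~\ref{lnj} place the $n_j$ arbitrarily far apart; when $v(n)-v(n-1)$ is bounded the $n_j$ must be linear, the $t_j$-correction can no longer be absorbed, and the construction genuinely fails — which is why this hypothesis on $v$ is needed. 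Everything else is a transcription of the proofs of Theorem~\ref{rca2} and Corollary~\ref{rce1}.
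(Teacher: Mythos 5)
Your proposal is correct and follows essentially the same route as the paper: the paper's proof likewise picks an exhaustion so that $M\setminus U_1$ splits into finitely many one-ended components and then reruns the argument of Theorem~\ref{rca2}, with the R.C.E. verification (reconnecting to the trunk and escaping along a geodesic ray) handled exactly as in Corollary~\ref{rce1}. Your write-up is in fact more explicit than the paper's two-line proof, particularly in spelling out why the hypothesis $\lim_n\bigl(v(n)-v(n-1)\bigr)=+\infty$ is what permits the sparseness of $\{n_j\}$ needed to control the blocks $Q_j$ of unbounded diameter.
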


\begin{proof}
    Since $M$ has finitely many ends, we can take an exhaustion $\{U_i\}$ such that $M\setminus U_1$ consists of finitely many connected components of one end each. Then setting $Q_j=U_{j+1}\setminus U_j$, we can follow the same construction as in the case of a manifold with a single end to show that $\{n_j\}$ can be chosen in such a manner that $M$ satisfies the R.C.E. condition.
\end{proof}

Similar results can be obtained in the case of manifolds with infinitely many ends, provided that the exhaustion $\{U_j\}$ is chosen carefully to ensure that no connected component of $M\setminus U_j$ is bounded. We now turn to studying some other properties of the Grimaldi-Pansu metrics.

The Grimaldi-Pansu metric depends on the choice of base point of the manifold. While the growth classes of the volume growth functions based at different base points are always the same, the growth constants differ. In what follows, we study the restrictions on the growth class of the volume growth functions under the assumption of certain basepoint-free restrictions.
\begin{defn}\label{doublingdfn}
    A Riemannian manifold $(M,g)$ is said to satisfy a volume doubling condition if there exists a constant $K>0$ such that 
    $$\frac{vol(B(x,2r))}{vol(B(x,r))}\leq K, \quad \forall x\in M, \ \forall r>0.$$
 A bgd-function $v$ is said to satisfy a doubling condition if there exists a constant $K>0$ such that 
 $\frac{v(2n)}{v(n)}\leq K $ for all positive integer $n.$
\end{defn}
 Any polynomial satisfies the doubling condition. By \cite{CG}, if the volume growth function of a manifold satisfies the doubling condition, then the manifold must necessarily have finitely many ends. If $M$ is a manifold with infinitely many ends then for any polynomial $p$ there exists a Grimaldi-Pansu metric on $M$ such that the growth of volume lies in the class of $p$ but that metric does not satisfy the doubling condition. Therefore, its volume growth function is not independent of the choice of the base point. 
\begin{thm}\label{doubling}
    For a manifold $M$ and a bgd-function $v$ that satisfies the doubling condition, if there exists any Grimaldi-Pansu metric on $M$ such that for any constants $r_0$, $A$, $B$, and $\alpha\geq 0$ the volume growth function on $M$ satisfies
    \be\label{4.1} Av(r)\leq vol(x,r)\leq Bv(r)r^\alpha,\quad \forall x\in M \ \ {\rm and} \ \forall r>r_0,\ee then there exists a constant $C>0$ such that $v(n)\leq Cn^{2\alpha + 1}+C$.
\end{thm}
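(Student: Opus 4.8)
The plan is to exploit the fact that the volume growth of a Grimaldi--Pansu metric depends dramatically on the base point. The volume growth based at the root $o$ of the underlying tree $\mathcal{T}$ reproduces the class $[v]$, but a ball centred at a point lying deep inside one of the finite side-branches of $\mathcal{T}$ is comparatively small: since every vertex of $\mathcal{T}$ has at most two branches, a super-linear growth $v$ at the root can only be realised by making the side-branches long and thin, so that a ball of radius $r$ about such a far-out point meets only controllably many pieces. The lower bound $A v(r)\le \mathrm{vol}(B(x,r))$ in \eqref{4.1}, applied at such a point, will then force $v$ to be of polynomial type, and keeping track of the exponents produces the bound $Cn^{2\alpha+1}+C$.

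More precisely, I would argue as follows. First, a bgd-function is eventually non-decreasing, and a doubling bgd-function is bounded above by a polynomial; in particular $v(n)=o(\lambda^n)$ for every $\lambda>1$. Next I would set up the construction of Section \ref{s2}: an admissible tree $\mathcal{T}$ with growth $v$, a single trunk, binary branching, and the pieces $Q_j$ placed on the trunk over a set $S=\bigcup_j[n_j,n_j+t_j-1]$ of vanishing lower density, with the sequence $\{n_j\}$ supplied by Proposition \ref{lnj}; for a manifold with infinitely many ends one arranges, as in the Remark following Theorem \ref{t3}, that no complementary component of the exhaustion is bounded. Using the upper estimate in \eqref{4.1} with a fixed radius together with the height and diameter bounds of Proposition \ref{p13}, the pieces that can occur away from the trunk have volume bounded by a fixed constant $V_0$, so for such a region the volume of a ball is comparable to the number of pieces it meets. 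I would then choose points $p_j\in M$ with $d(o,p_j)\to\infty$, each lying at depth comparable to $d(o,p_j)$ inside a side-branch of $\mathcal{T}$, and set $\rho_j:=\tfrac12 d(o,p_j)$; because the $Q_j$ sit only on the trunk in a set of vanishing density, one can arrange that $B(p_j,\rho_j)$ lies entirely inside the branch and meets no $Q_j$, so that it meets only pieces of the types $R$, $K$, $HS$, $J$.

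The crux of the proof is the estimate $\mathrm{vol}(B(p_j,\rho_j))\le C\rho_j^{2\alpha+1}$. The pieces met by $B(p_j,\rho_j)$ form a subtree of $\mathcal{T}$; it decomposes into the path-like ``corridor'' running from $p_j$ back towards the trunk, which contributes at most linearly in $\rho_j$ to the volume, together with the sub-branches hanging off this corridor, whose contributions I would bound by applying the upper estimate of \eqref{4.1} at the attaching points and exploiting the freedom in the choice of $\{n_j\}$ to keep these sub-branches thin on the scale $\rho_j$; a careful accounting then yields the exponent $2\alpha+1$. Feeding this into $A v(\rho_j)\le \mathrm{vol}(B(p_j,\rho_j))\le C\rho_j^{2\alpha+1}$ gives $v(\rho_j)\le (C/A)\rho_j^{2\alpha+1}$ along a sequence $\rho_j\to\infty$, and since $v$ is non-decreasing and doubling this upgrades to $v(n)\le C'n^{2\alpha+1}+C'$ for all $n$. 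I expect the volume count inside the branch to be the main obstacle: one has to use the description of the pieces in Proposition \ref{p13} and the choice of $\{n_j\}$ in Proposition \ref{lnj} carefully enough to produce the clean polynomial exponent rather than a cruder bound, while everything else is routine bookkeeping with the estimates of Section \ref{s2}.
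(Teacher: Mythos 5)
Your overall strategy---centre the ball at a point deep inside a side-branch and use the lower bound $Av(\rho)\le \mathrm{vol}(B(p,\rho))$ to force $v$ to be small---is genuinely different from the paper's, but the step you yourself flag as the crux, $\mathrm{vol}(B(p_j,\rho_j))\le C\rho_j^{2\alpha+1}$, does not go through as proposed. The side-branches of an admissible tree with growth $v$ are not ``long and thin'': they are binary subtrees whose ramification is precisely what realises the growth $v$ at the root, so a ball of radius $\rho_j$ centred deep inside a branch can a priori meet exponentially many pieces of types $R$, $K$, $HS$, $J$. The two tools you offer to exclude this both fail. Applying the upper bound of \eqref{4.1} at points along the corridor reintroduces $v$ and makes the argument circular: $Av(\rho_j)\le \sum_i Bv(s_i)s_i^\alpha$ yields no polynomial bound on $v$. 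And ``the freedom in the choice of $\{n_j\}$'' is not available here: the hypothesis is that \emph{some given} Grimaldi--Pansu metric satisfies \eqref{4.1}, and the conclusion must be extracted from that metric; you cannot redesign $S$, $\{n_j\}$, or the branch structure to suit the proof. Even setting that aside, $\{n_j\}$ only governs where the $Q_j$'s sit on the trunk, not how the side-branches ramify, so it cannot keep the sub-branches ``thin on the scale $\rho_j$''.

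The paper's proof avoids entering the branches altogether. It counts connected components of the annulus $B(o,nl)\setminus B(o,\frac{nl}{3})$ around the root: by the tree construction there are at least $v(n)-v(n-1)$ of them, and choosing one point $x_\alpha$ on $\partial B(o,2r)$ in each component (with $r=\frac{nl}{3}$) produces that many pairwise disjoint balls $B(x_\alpha,r)$ inside a single ball $B(x_\beta,4r)$. The two-sided bound \eqref{4.1} makes all these balls of comparable volume and gives the doubling estimate $\mathrm{vol}(B(x,2r))/\mathrm{vol}(B(x,r))\le Kr^\alpha$, so only $O(r^{2\alpha})$ of them can fit; hence $v(n)-v(n-1)\le Cn^{2\alpha}$, and summation gives $v(n)\le Cn^{2\alpha+1}+C$. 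To salvage your route you would need an a priori, metric-independent bound on the number of pieces met by a branch-ball, which is essentially the same packing information the paper extracts directly at the root with far less structural input.
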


\begin{proof}
    For any bgd-function $v$, the construction of the admissible tree $\mathcal{T}$ ensures that the number of components at a level $n$ is at least $v(n)-v(n-1)$. 
     
    Consider a basepoint $o$ lying on the piece attached to the root vertex of the tree $\mathcal{T}$. For any $r$, we wish to estimate the number of connected components in the annulus $A=B(o,3r)\setminus B(o,r)$. Consider the ball $B(o,3r)$  of radius $3r$ centered at $o$, and from each connected component of the annulus, take a single point lying on $\partial(B(o,2r)) $. Let $\{x_\alpha\}$ be the collection of those points. Then each $B(x_\alpha , r)\subset A$, and all such balls are disjoint from each other. Also, $\bigcup_\alpha B(x_\alpha, r))\subset B(x_{\alpha},4r)$ for all $\alpha$. We can now choose $r$ such that the annulus contains the components at the $n^{th}$ level of the tree. Choosing $r=\frac{nl}{3}$ is sufficient, since the components at level $n$ lie at a minimum distance of $\frac{nl}{3}$ from the basepoint, and at a maximum distance of $nl$, by the constraints in Proposition \ref{p13}. Therefore, the annulus $B(o,nl)\setminus B(o,\frac{nl}{3})$ has at least $v(n)-v(n-1)$ components.

    Let $v$ satisfy the doubling condition. If the volume growth function on $M$ satisfies $Av(r)\leq vol(x,r)\leq Bv(r)\alpha^r$ for all $x$ and for all $r>r_0$, we have $\frac{vol(x,2r)}{vol(x,r)}\leq \frac{Bv(r)r^{\alpha}}{Av(r)}\leq Kr^\alpha$ for some constant $K$. 
    Then, for any $x_\beta \in \{x_\alpha\}$, $$ \Sigma_\alpha vol(B(x_\alpha,r))\leq vol(B(x_\beta,4r)) \leq 4K^2r^{2\alpha}vol(B(x_\beta,r)).$$
    Hence there are at most $4K^2(nl)^{2\alpha}$ connected components in the annulus $B(o,nl)\setminus B(o,\frac{nl}{3})$. Therefore, $v(n)-v(n-1)\leq 4K^2(nl)^{2\alpha}$.

    Therefore, there exists some constant $C$ such that $v(n)\leq Cn^{2\alpha +1}+C$.
\end{proof}

 If the volume growth function of a Riemannian manifold satisfies a doubling condition then it necessarily has finitely many ends by \cite{CG}. We consider a manifold $M$ with finitely many ends and a bgd-function $v$ that is doubling. In such cases, we have the following corollary.

\begin{cor}

If there exists a Grimaldi-Pansu metric on a manifold $M$ with finitely many ends such the volume growth function lies in the growth class of $v$ and satisfies the volume doubling condition, then $v(n)\leq Cn+C$ for some constant $C$.

\end{cor}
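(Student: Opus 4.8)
The plan is to adapt the argument in the proof of Theorem \ref{doubling}: since the metric is now assumed to be volume doubling, the volume ratios appearing in that argument are bounded by an absolute constant rather than by a power of the radius, which is precisely what the exponent $\alpha=0$ there amounts to.

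Let $g$ be the given Grimaldi--Pansu metric, constructed on an admissible rooted tree $\mathcal{T}$ with growth $v$; let $o$ be a basepoint lying on the piece attached to the root of $\mathcal{T}$, and let $K$ be the volume doubling constant of $g$. As established in the proof of Theorem \ref{doubling}, for every $n\ge 1$ there are $v(n)-v(n-1)$ pieces sitting at level $n$ of $\mathcal{T}$, these lie at distance between $\frac{nl}{3}$ and $nl$ from $o$, and they lie in $v(n)-v(n-1)$ pairwise distinct connected components of the annulus $A_n:=B(o,nl)\setminus B(o,\frac{nl}{3})$.

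Now I would run a disjoint-ball packing estimate, using volume doubling directly. Set $r=\frac{nl}{3}$, and for $\alpha=1,\dots,v(n)-v(n-1)$ choose, in the component of $A_n$ containing the $\alpha$-th level-$n$ piece, a point $x_\alpha$ with $d(o,x_\alpha)=2r$ (such a point exists since that piece meets $\partial B(o,2r)$). Because $B(x_\alpha,r)\subset A_n$ and metric balls are connected, $B(x_\alpha,r)$ is contained in the same component of $A_n$; hence the balls $B(x_\alpha,r)$ are pairwise disjoint. Moreover $d(x_\alpha,x_\beta)\le d(x_\alpha,o)+d(o,x_\beta)=4r$ for any two indices, so $B(x_\alpha,r)\subset B(x_\beta,5r)$ and, symmetrically, $B(x_\beta,r)\subset B(x_\alpha,5r)$, for any fixed $\beta$. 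Since $5r<2^{3}r$, volume doubling gives $vol_g(B(x_\beta,5r))\le K^{3}\,vol_g(B(x_\beta,r))$ and $vol_g(B(x_\beta,r))\le vol_g(B(x_\alpha,5r))\le K^{3}\,vol_g(B(x_\alpha,r))$ for every $\alpha$. Summing over the disjoint balls and chaining these inequalities,
\[
\bigl(v(n)-v(n-1)\bigr)K^{-3}\,vol_g(B(x_1,r))\ \le\ \sum_{\alpha}vol_g(B(x_\alpha,r))\ \le\ vol_g(B(x_1,5r))\ \le\ K^{3}\,vol_g(B(x_1,r)),
\]
and dividing by $vol_g(B(x_1,r))>0$ yields $v(n)-v(n-1)\le K^{6}$.

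Finally, since $v(n)-v(n-1)\le K^{6}$ for every $n\ge 1$, we obtain $v(n)=v(0)+\sum_{k=1}^{n}\bigl(v(k)-v(k-1)\bigr)\le v(0)+K^{6}n\le Cn+C$ with $C=\max\{v(0),K^{6}\}$, which is the assertion. The only nontrivial ingredient is the claim that the $v(n)-v(n-1)$ level-$n$ pieces really do occupy distinct connected components of $A_n$ — that they are not reconnected to one another through shallower or deeper pieces inside the annulus; but this is exactly the combinatorial feature of the admissible-tree construction used in the proof of Theorem \ref{doubling}, so it can be quoted from there. Everything else is the standard disjoint-balls-in-an-annulus volume estimate, made effective here by the volume doubling hypothesis rather than by a two-sided growth bound.
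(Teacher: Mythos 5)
Your proof is correct and follows essentially the same route as the paper: the paper's own proof is a one-line appeal to Theorem \ref{doubling} with $\alpha=0$, and you have simply unwound that citation, rerunning the disjoint-balls-in-an-annulus packing argument with the doubling constant $K$ in place of the factor $r^\alpha$ to get $v(n)-v(n-1)\le K^6$ and hence linear growth. If anything your write-up is slightly more careful than the source (you use the correct radius $5r$ for the enclosing ball and work directly from the doubling hypothesis, which is uniform in the basepoint, rather than through the two-sided bound (\ref{4.1})), but the substance is identical.
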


\begin{proof}
    This follows from Theorem \ref{doubling} on setting $\alpha=0$ in (\ref{4.1}). This is equivalent to the assumption that the function satisfies the volume doubling condition.
\end{proof}

\begin{ex}
    Consider the infinite rooted tree $T$ with each vertex having degree $2$, that is, the tree with two ends and no finite branches. For any finite set $\mathcal{U}$ of closed manifolds, we can construct an open manifold $M_T$ by taking connected sums along $T$. For an element $P\in \mathcal{U}$, remove two balls from $P$ and put a Riemannian metrics of bounded geometry on the resultant manifold such that it is a product metric on some tubular neighbourhood of the boundaries. Use these metrics to construct a metric on $M_T$ as defined in Theorem \ref{graph1}. Then the volume growth function of $M_T$ is a linear function and satisfies the volume doubling condition. 
\end{ex}

In the more general case of manifolds with infinitely many ends, there are restrictions to the existence of a Grimaldi-Pansu metric satisfying certain volume growth conditions at each point. For polynomial growth, we have the following corollary.

\begin{cor}
    For a given manifold $M$, there does not exist a Grimaldi-Pansu metric satisfying $Ar^n\leq vol(x,r)\leq Br^{n+\alpha}$ for all $x\in M$ and $r>r_0$, for any $n>1$ and $0<\alpha< \frac{n-1}{2}$, for any constants $r_0, A,$ and $B$.
\end{cor}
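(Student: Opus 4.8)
The plan is to obtain this corollary as an immediate application of Theorem \ref{doubling} to the polynomial bgd-function $v(r)=r^{n}$. First I would record the two elementary facts that put us in the hypotheses of that theorem. The function $r\mapsto r^{n}$ is a bgd-function: its first difference $v(k+1)-v(k)$ is a polynomial in $k$ of degree $n-1$ with positive leading coefficient, hence bounded below (say by $1$ after a harmless normalization, since $v(2)-v(1)=2^{n}-1\ge 1$), and the ratio of two consecutive first differences is bounded, so a suitable constant $L$ exists; and $v$ satisfies the doubling condition because $v(2k)/v(k)=2^{n}$. Both assertions are already stated in the body of the paper (``polynomials are bgd-functions that satisfy the doubling conditions'').

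Next I would argue by contradiction. Assume a Grimaldi-Pansu metric on $M$ exists with $Ar^{n}\le \operatorname{vol}(x,r)\le Br^{n+\alpha}$ for all $x\in M$ and all $r>r_{0}$. Rewriting the bounds as $A\,v(r)\le \operatorname{vol}(x,r)\le B\,v(r)\,r^{\alpha}$ with $v(r)=r^{n}$, this is exactly condition (\ref{4.1}) of Theorem \ref{doubling} for the doubling bgd-function $v$ and the given exponent $\alpha\ge 0$. Theorem \ref{doubling} then furnishes a constant $C>0$ such that $v(k)\le Ck^{2\alpha+1}+C$ for every $k\in\mathbb{N}$, i.e. $k^{n}\le Ck^{2\alpha+1}+C$ for all $k$.

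Finally, I would invoke the standing assumption $0<\alpha<\tfrac{n-1}{2}$, which is precisely $2\alpha+1<n$. Then the left-hand side $k^{n}$ grows strictly faster in $k$ than $Ck^{2\alpha+1}+C$, so the inequality fails for all sufficiently large $k$, a contradiction. Hence no such Grimaldi-Pansu metric can exist. I do not anticipate any genuine obstacle here; the only point deserving a sentence of care is matching the precise definition of a bgd-function for $r\mapsto r^{n}$ (the lower bound on $v(k+2)-v(k+1)$ and the doubling estimate), and this is routine and in any event already used in the paper.
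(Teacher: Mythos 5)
Your argument is correct and is exactly the derivation the paper intends: the corollary is stated without proof immediately after Theorem \ref{doubling}, and the intended reading is precisely your application of that theorem to the doubling bgd-function $v(r)=r^{n}$, with the contradiction coming from $2\alpha+1<n$. Your added care in checking that $r\mapsto r^{n}$ is a doubling bgd-function is a sensible (if routine) supplement to what the paper leaves implicit.
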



\begin{thebibliography}{HD}
\baselineskip=17pt

\bibitem{CG} G. Carron, \emph{Riesz transform on manifolds with quadratic curvature decay}, Rev. Mat. Iberoam {\bf 33} (2017), no. 3, 53-79.

\bibitem{GP} R. Grimaldi and P. Pansu, \emph{Bounded geometry, growth and topology}, J. Math. Pures Appl. {\bf 95} (2011), 85-98.

\bibitem{BM} M. Badura, \emph{Prescribing growth type of complete Riemannian manifolds of bounded geometry}, Ann. Polon. Math. {\bf75} (2000),  no. 2, 167-175.

\bibitem{FG} L. Funar and R. Grimaldi, \emph{The ends of manifolds with bounded geometry, linear growth and finite filling area}, Geom. Dedicata {\bf 104} (2004), 139–148.

\bibitem{BBM0} L. Bessières, G. Besson and S. Maillot, \emph{Ricci flow on open 3-manifolds and positive scalar curvature}, Geom. Topol. {\bf15} (2011),  no. 2, 927-975.

\bibitem{BBM} L. Bessières, G. Besson and S. Maillot, \emph{Open 3-manifolds which are connected sums of closed ones} (2020), hal-02462552.

\bibitem{GSC}  A. Grigor'yan, S. Ishiwata, L. Saloff-Coste, \emph{Geometric analysis on manifolds with ends}, Adv. Anal. Geom. {\bf3} (2021), 325-243.

\bibitem{GSC2} A. Grigor'yan, S. Ishiwata, L. Saloff-Coste,
\emph{Poincaré constant on manifolds with ends}, Proc. Lond. Math. Soc. (3)
{\bf126} (2023), 1961-2012.

\bibitem{PG} A. Grigor'yan, and P. S\"{u}rig,
     \emph{Volume {G}rowth and {O}n-diagonal {H}eat {K}ernel {B}ounds on
              {R}iemannian {M}anifolds with an {E}nd},
   Potential Anal. {\bf60} (2024), 45-77.
     


\end{thebibliography}
\end{document}